\newcommand{\excise}[1]{}
\newtheorem{thm}{Theorem}[section]
\newtheorem{lemma}[thm]{Lemma}
\newtheorem{cor}[thm]{Corollary}
\newtheorem{prop}[thm]{Proposition}
\newtheorem{prob}[thm]{Problem}
\theoremstyle{definition}
\newtheorem{example}[thm]{Example}
\newtheorem{remark}[thm]{Remark}
\newtheorem{defn}[thm]{Definition}
\newtheorem{notation}[thm]{Notation}
\numberwithin{equation}{section}
\newcommand{\ring}[1]{\ensuremath{\mathbb{#1}}}
\renewcommand\>{\rangle}
\newcommand\NN{\ring{N}}
\newcommand\ZZ{\ring{Z}}
\newcommand\kk{\Bbbk}
\newcommand\oQ{\hspace{.15ex}\ol{\hspace{-.15ex}Q\hspace{-.25ex}}\hspace{.25ex}}
\newcommand\ttt{\mathbf{t}}
\newcommand\app{\mathord\approx}
\newcommand\til{\mathord\sim}
\newcommand\minus{\smallsetminus}
\def\ol#1{{\overline {#1}}}
\DeclareMathOperator\MesoAss{MesoAss} 
\begin{document}

\mbox{}
\title[On mesoprimary decomposition of monoid congruences]{On mesoprimary decomposition \\ of monoid congruences\qquad}

\author{Christopher O'Neill}
\address{Mathematics\\University of California, Davis\\One Shields Ave\\Davis, CA 95616}
\email{coneill@math.ucdavis.edu}

\makeatletter
  \@namedef{subjclassname@2010}{\textup{2010} Mathematics Subject Classification}
\makeatother
\subjclass[2010]{Primary: 20M14, 05E40, 20M30, 13A02}

\begin{abstract}
\hspace{-2.05032pt}
We prove two main results concerning mesoprimary decomposition of monoid congruences, as introduced by Kahle and Miller.  First, we identify which associated prime congruences appear in every mesoprimary decomposition, thereby completing the theory of mesoprimary decomposition of monoid congruences as a more faithful analog of primary decomposition.  Second, we answer a question posed by Kahle and Miller by characterizing which finite posets arise as the set of associated prime congruences of monoid congruences.  
\end{abstract}
\maketitle


\section{Introduction}\label{s:intro}

A \emph{congruence} is an equivalence relation on the elements of a monoid that respects the monoid operation.  This paper focuses on congruences on the monoid of monomials in a polynomial ring $S$ that arise from \emph{binomial ideals} in $S$ (that is, ideals whose generators have at most two terms).  In particular, any binomial ideal $I \subset S$ identifies, up to scalar multiple, any two monomials appearing in the same binomial in $I$, inducing a congruence $\til_I$ on the monoid of monomials in $S$.  In~\cite{kmmeso}, Kahle and Miller introduce \emph{mesoprimary decompositions} of binomial ideals, which are combinatorial approximations of primary decompositions constructed from the underlying congruence.  

Mesoprimary decompositions are constructed in two settings: first for monoid congruences, and then for binomial ideals; both are designed to parallel standard primary decomposition in a Noetherian ring \cite[Chapter~3]{Eis96}.  At the heart of mesoprimary decomposition, for both monoid congruences and binomial ideals, lies a notion of associated objects analogous to associated prime ideals in standard primary decomposition.  In particular, any congruence $\til$ has a collection of associated prime congruences, and each component in a mesoprimary decomposition for $\til$ has precisely one associated prime congruence.  
However, unlike standard primary decomposition, eliminating redundant mesoprimary components can produce decompositions in which some of the associated objects do not appear as the associated object of any component (Example~\ref{e:keynottrue}).  

The focus of this paper is on mesoprimary decomposition of monoid congruences, and the two main results are as follows.  
First, we identify the class of truly associated prime congruences (Definition~\ref{d:truewitness}), which must appear as the associated prime congruence of some component in every mesoprimary decomposition of $\til$ (Theorems~\ref{t:truedecomp} and~\ref{t:trulyassociatedcong}), thereby completing the theory of mesoprimary decomposition of monoid congruences as a more faithful analog of primary decomposition.  
Second, we characterize which finite posets arise as the set of associated prime congruences of a congruence, and in doing so answer \cite[Problems~17.4 and~17.9]{kmmeso}.  

\subsection*{Acknowledgements}

The author is very grateful to Ezra Miller, Laura Matusevich, Thomas Kahle and Christine Berkesch for numerous discussions and conversations.  Much of this work was completed while the author was a graduate student at Duke University, funded in part by Ezra Miller's NSF Grant DMS-1001437.  Portions of this work also appeared in the author's doctoral thesis \cite{mesothesis}.

\section{Overview of mesoprimary decomposition of monoid congruences}
\label{s:mesodecomp}

In this section, we briefly review the necessary definitions and results from \cite{kmmeso} concerning mesoprimary decomposition of monoid congruences.  See \cite{kmmeso} for a more thorough treatment on mesoprimary decomposition, including the resulting (meso)primary decompositions of binomial ideals; see \cite{grilletSemigroups} for basic monoid definitions.  

\subsection*{Conventions}

Unless otherwise stated, $Q$ denotes a finitely generated (equivalently, noetherian) commutative monoid, and $\kk$ denotes an arbitrary field.  

\begin{defn}\label{d:cong}
A \emph{binomial} in $\kk[Q]$ is an element of the form $\ttt^a - \lambda\ttt^b$ where $a, b \in Q$ and $\lambda \in \kk$.  An ideal $I \subset \kk[Q]$ is \emph{binomial} (resp.~\emph{monomial}) if it can be generated by binomials (resp.~monomials).  An equivalence relation $\til$ on $Q$ is a \emph{congruence} if $a \sim b$ implies $a + c \sim b + c$ for all $a, b, c \in Q$.  The congruence~$\til_I$ on $Q$ \emph{induced} by a binomial ideal $I \subset \kk[Q]$  sets $a \sim_I b$ whenever $\ttt^a - \lambda\ttt^b \in I$ for some nonzero $\lambda \in \kk$.  
\end{defn}


\begin{notation}
For a congruence~$\til$ on~$Q$ and a prime $P \subset Q$, we write $Q_P$ for the localization along~$P$ and $\oQ_P = Q_P/\til$ for the quotient of $Q_P$ modulo $\til$.  We denote by~$\ol q$ the image of~$q \in Q$ in $\oQ = Q/\til$.  The nil of $Q$, if it exists, is denoted $\infty \in Q$.  
\end{notation}

\begin{defn}[{\cite[Definitions~2.12, 3.4, 4.7, 4.10, 7.1, 7.2, 7.7,
and~7.12]{kmmeso}}]\label{d:kmcong}
Fix a congruence~$\til$ on~$Q$ and a prime $P \subset Q$.  
\begin{enumerate}[(a)]
\item%
An element $q \in Q$ is an \emph{aide} for an element $w \in Q$ and a generator $p \in P$ if (i)~$\ol w \ne \ol q$, (ii)~$\ol w + \ol p = \ol q + \ol p$, and (iii)~$\ol q$ is maximal in the set $\{\ol q,\ol w\}$.  If $q$ is an aide for $w$ for each generator of $P$, then $q$ is a \emph{key aide}.  

\item 
An element $w \in Q$ is a \emph{witness} for~$P$ if it has an aide for each $p \in P$, and a \emph{key witness} for~$P$ if it has a key aide.  A key witness $w$ is a \emph{cogenerator} of $\til$ if $w + p$ is nil modulo $\til$ for all $p \in P$.

\item%
The congruence~$\til$ is \emph{$P$-primary} if every $p \in P$ is nilpotent in~$\oQ$ and every $f \in Q \minus P$ is cancellative in~$\oQ$.  A $P$-primary congruence~$\til$ is \emph{mesoprimary} if every element of the quotient~$\oQ$ is partly cancellative (that is, ).  The congruence~$\til$ is \emph{coprincipal} if it is mesoprimary and every cogenerator for $\til$ generates the~same~ideal in~$\oQ$.

\item%
The \emph{coprincipal component} $\til_w^P$ of~$\til$ cogenerated by a
witness $w \in Q$ for~$P$ is the coprincipal congruence that relates $a \sim_w^P b$ if one of the following is satisfied:
\begin{enumerate}[(i)]
\item%
both $\ol a$ and $\ol b$ generate an ideal not containing $\ol q$ in
$\oQ_P$; or
\item%
$\ol a$ and $\ol b$ differ by a unit in $\oQ_P$ and $\ol a + \ol c =
\ol b + \ol c = \ol q$ for some $\ol c \in \oQ_P$.
\end{enumerate}
\end{enumerate}
A (key) witness for~$P$ may be called a (key) $\til$-witness for~$P$
to specify~$\til$.  Congruences may be called $P$-mesoprimary or
$P$-coprincipal to specify~$P$.
\end{defn}

\begin{thm}[{\cite[Theorem~8.4]{kmmeso}}]\label{t:kmcong}
Each congruence~$\til$ on $Q$ is the common refine\-ment of the
coprincipal components cogenerated by its key witnesses.
\end{thm}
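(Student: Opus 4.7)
Reading ``common refinement'' as the intersection of equivalence relations, the theorem amounts to the biconditional $a \til b \iff a \til_w^P b$ for every prime $P \subseteq Q$ and every key $\til$-witness $w$ for $P$.

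The forward implication unwinds the definitions directly. Given $a \til b$, the images $\ol a, \ol b$ agree in $\oQ$ and hence in $\oQ_P$. If $\ol w \notin \langle \ol a \rangle \subseteq \oQ_P$, clause~(i) of Definition~\ref{d:kmcong}(d) applies; otherwise there is $\ol c \in \oQ_P$ with $\ol a + \ol c = \ol w$, and the equality $\ol a = \ol b$ then gives $\ol b + \ol c = \ol w$ as well, so clause~(ii) applies with the identity playing the role of the unit in $\oQ_P$.

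For the reverse implication, suppose $a \not\sim b$; the plan is to extract a \emph{maximal separating translate} using the noetherianity of $Q$. The set of $\ol q \in \oQ$ with $a+q \not\sim b+q$ is nonempty (it contains the class of $0$), and by ACC on principal ideals in $\oQ$ it has a maximal element $\ol{q^*}$; set $A = a+q^*$ and $B = b+q^*$. Next, let $P = \{p \in Q : A+p \til B+p\}$ and verify that $P$ is a prime of $Q$: closure under addition is immediate, while the face property and primality follow from the maximality of $\ol{q^*}$ together with the cancellation properties of $\oQ$ away from $P$. Then, after interchanging $A$ and $B$ if needed to satisfy the maximality condition in Definition~\ref{d:kmcong}(a)(iii), the element $A$ becomes a key $\til$-witness for $P$ with key aide $B$: $\ol A \ne \ol B$ by construction, while $\ol A + \ol p = \ol B + \ol p$ for every generator $p$ of $P$. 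A direct inspection of Definition~\ref{d:kmcong}(d) with $w = A$ then shows $A \not\sim_A^P B$ (clause~(i) fails because $\ol A$ obviously divides itself, and clause~(ii) fails because $\ol A \ne \ol B$ in $\oQ_P$ and the only $\ol c$ with $\ol A + \ol c = \ol A$ is a unit, forcing $\ol B$ to equal $\ol A$ up to that unit — which, combined with the defining property of $P$, contradicts the maximality of $\ol{q^*}$). The congruence property of $\til_A^P$ applied in contrapositive form to the translate by $q^*$ then forces $a \not\sim_A^P b$, completing the proof.

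The delicate step is the construction of $B$ as a \emph{single} key aide for $A$ — certifying failure for every generator of $P$ simultaneously, rather than producing one aide per generator. This is what upgrades $A$ from a mere witness to a \emph{key} witness, and it relies on the maximality of $\ol{q^*}$ being used uniformly across the generators of $P$, together with the structure theory of $P$-primary localizations of monoid congruences developed in~\cite{kmmeso}. The secondary subtlety, verifying that $P$ is genuinely a prime rather than merely an ideal, is handled by the same maximality argument combined with the cancellation properties of $\oQ$ outside $P$.
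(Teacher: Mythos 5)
The forward direction of your argument is correct, but the reverse direction has a genuine gap at the very first step: the set $S = \{\ol q \in \oQ : a + q \nsim b + q\}$ need not have a maximal element in the divisibility order. ``ACC on principal ideals'' in a noetherian monoid means ascending chains of ideals $\<q_1\> \subseteq \<q_2\> \subseteq \cdots$ stabilize, which (since $\<q_1\> \subseteq \<q_2\>$ iff $q_2$ divides $q_1$) furnishes \emph{minimal} elements of $S$ in the divisibility preorder, not maximal ones. Finitely generated monoids do not satisfy the ascending chain condition on elements: already in $Q = \NN$ with $\til$ the trivial congruence and $a = 0$, $b = 1$, one has $S = \NN$, which has no maximal element. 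So $q^*$ does not exist in general, and the construction never gets off the ground. Moreover, even if one takes $q^* = 0$ (the only canonical fallback), the set $P = \{p : A + p \sim B + p\}$ need not be prime: for $Q = \NN^2$ with $\til$ generated by $(1,1) \sim (2,2)$, $a = (0,0)$, $b = (1,1)$, one gets $P = \{(i,j) : i \ge 1 \text{ and } j \ge 1\}$, whose complement $\{(i,j) : i = 0 \text{ or } j = 0\}$ is not closed under addition. Your appeal to ``the cancellation properties of $\oQ$ away from $P$'' to prove primality is also circular, since $P$ is the object being constructed.

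The actual argument in \cite[Theorem~8.4]{kmmeso} proceeds in the opposite order: it first establishes the $P$-primary decomposition $\til = \bigcap_P (\text{kernel of } Q \to \oQ_P)$ over associated primes, picking a prime $P$ at which $a$ and $b$ remain separated after localization, and \emph{then} maximizes the separating translate inside the localization $Q_P$. The crucial point your proof misses is that localizing at the right $P$ makes the congruence $P$-primary, so that the generators of $P$ act nilpotently on $\oQ_P$; this nilpotence is what bounds the separating translates and guarantees a maximal $q^*$ exists. Without that localization step there is no bound, and the ``maximal separating translate'' strategy cannot be carried out. This two-stage structure is also reflected in the way the paper states Corollary~\ref{c:kmcong}, which records exactly the intermediate output (a prime $P$ and translate $u$ making $a + u$ a key witness) that the source proof produces.
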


The proof of Theorem~\ref{t:kmcong} at the source \cite[Theorem~8.4]{kmmeso} implies the following.

\begin{cor}\label{c:kmcong}
Given a congruence~$\til$ on $Q$ and elements $a, b \in Q$ with $a
\nsim b$, there exists a monoid prime $P \subset Q$ and an element
$u \in Q$ such that (after possibly swapping $a$ and $b$) the element
$a + u$ is a key $\til$-witness for~$P$ with key aide $b + u$.
\end{cor}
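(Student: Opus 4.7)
The plan is to derive this corollary directly from the common refinement property in Theorem~\ref{t:kmcong}. Since $a \nsim b$, there must exist a monoid prime $P \subset Q$ and a key $\sim$-witness $w$ for $P$ such that the coprincipal component $\sim_w^P$ separates $a$ from $b$. Unpacking Definition~\ref{d:kmcong}(d), the failure of condition~(i) for the pair $(a,b)$ forces at least one of $\ol a, \ol b$ to generate an ideal in $\oQ_P$ containing $\ol w$; after possibly swapping $a$ and $b$, I may assume this holds for $\ol a$, so there exists $\ol c \in \oQ_P$ with $\ol a + \ol c = \ol w$ in $\oQ_P$.

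I would next translate this localized relation back to $Q$.  Clearing the $P$-localization by an appropriate element $f \in Q \minus P$ and absorbing the resulting shift into the pair $(a, b)$ (using that elements of $Q \minus P$ are cancellative in $\oQ_P$ and hence do not alter the $\sim$-relation when added to both sides) produces an element $u \in Q$ with $a + u \sim w$.  Since $a + u$ lies in the same $\sim$-class as $w$, it inherits the property of being a key $\sim$-witness for $P$.  It then remains to verify the three defining conditions for $b + u$ to be a key aide of $a + u$ for each generator $p \in P$: (1)~$\ol{a+u} \ne \ol{b+u}$, (2)~$\ol{(a+u) + p} = \ol{(b+u) + p}$, and (3)~$\ol{b + u}$ is maximal in $\{\ol{a+u}, \ol{b+u}\}$.

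Condition~(1) holds since $\ol{b+u} = \ol{a+u} = \ol w$ would reinstate $a \sim_w^P b$ via condition~(ii) of the definition of the coprincipal component, while condition~(3) can be arranged by the initial choice of which of $a$ or $b$ to swap.  The main obstacle I expect is condition~(2), together with the precise construction of the shift $u$; both are most naturally addressed by unpacking the specific argument in the proof of \cite[Theorem~8.4]{kmmeso} rather than invoking its statement alone, since one must carefully bridge between the $P$-localized quotient $\oQ_P$ (where $\sim_w^P$ naturally lives) and the unlocalized $\oQ$ (where the key aide conditions are stated).
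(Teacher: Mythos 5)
Your approach has a genuine gap, and to your credit you identify it yourself at the end: the corollary does not follow from the \emph{statement} of Theorem~\ref{t:kmcong} together with the definition of coprincipal components, but only from the internals of the proof of \cite[Theorem~8.4]{kmmeso}.  That is exactly what the paper does --- its ``proof'' of Corollary~\ref{c:kmcong} is a citation to the proof, not the statement, of that theorem.  Let me spell out why the route through the statement alone cannot close.

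Starting from a key witness $w$ with $a \nsim_w^P b$ and unpacking Definition~\ref{d:kmcong}(d), failure of condition~(i) gives (after a swap) that $\ol a$ generates an ideal of $\oQ_P$ containing $\ol w$, so $\ol a + \ol c = \ol w$ for some $\ol c \in \oQ_P$.  From here two problems arise that the decomposition statement does not address.  First, $\ol c$ lives in the localization: clearing denominators yields $a + u \sim w + f$ in $Q$ for some $f \in Q\minus P$, and the key aide conditions are stated in $\oQ$, not $\oQ_P$, so the shift by $f$ must be absorbed carefully into the pair $(a,b)$; this is not a formality since $\til$ need not be $P$-primary, so $f$ need not act cancellatively.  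Second, and more seriously, nothing in the failure of conditions~(i) and~(ii) produces the key aide requirement $\ol{(a+u)+p} = \ol{(b+u)+p}$ for every generator $p$ of~$P$, nor even $\ol{a+u} \neq \ol{b+u}$.  Your argument for $\ol{a+u} \neq \ol{b+u}$ --- that equality would ``reinstate $a \sim_w^P b$ via condition~(ii)'' --- is also incomplete, since condition~(ii) additionally requires $\ol a$ and $\ol b$ to differ by a unit in $\oQ_P$, which is not given.  The coprincipal component controls ideal membership and Green's classes of $\ol a, \ol b$ relative to $\ol w$, but says nothing about the behavior of $a+u$ versus $b+u$ after adding generators of $P$.

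The actual argument in \cite[Theorem~8.4]{kmmeso} works in the opposite direction: it constructs $u$ and $P$ directly by a Noetherian maximality argument (roughly, choose $u$ so that $a+u \nsim b+u$ is as ``high'' as possible, and let $P$ consist of the directions along which adding elements forces congruence or nilpotence), and the key-witness-with-specified-key-aide conclusion falls out of that construction.  Corollary~\ref{c:kmcong} records precisely that byproduct.  So the right fix to your proposal is not to patch the derivation from the decomposition statement but to reproduce the maximality construction from the source proof, as the paper indicates.
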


Lastly, we recall the definition of prime congruences from \cite{kmmeso}, which play the role of ``associated objects'' in this setting.  

\begin{defn}[{\cite[Definitions~5.1 and~5.2]{kmmeso}}]\label{d:primecong}
Fix a congruence~$\til$ on a monoid~$Q$, a prime ideal $P \subset Q$,
and an element $q \in Q$ that is not nil modulo~$\til$.
\begin{enumerate}[(a)]
\item%
Let $G_P \subset Q_P$ denote the unit group of the localization~$Q_P$,
and write $K_q^P \subset G_P$ for the stabilizer of~$\ol q \in \oQ_P$
under the action of~$G_P$.

\item%
Let $\app$ denote the congruence on~$Q_P$ that sets $a \approx b$ when either (i)~$a$ and $b$ both lie in $P_P$ or (ii) $a$ and $b$ both lie in $G_P$ and $a - b \in K_q^P$.  The \emph{$P$-prime congruence} of~$\til$ at $q$ is $\ker(Q \to
Q_P/\app)$.

\item%
The $P$-prime congruence at $q$ is \emph{associated to $\til$} if $q$
is a key witness for~$P$.
\end{enumerate}
\end{defn}

\begin{remark}\label{r:mesoprimaryequiv}
By \cite[Corollary~6.7]{kmmeso}, a congruence is $P$-mesoprimary if and only if it is $P$-primary and the $P$-prime congruences at every non-nil element all coincide.  Generally speaking, each witness $w$ for $P$ detects an element whose $P$-prime congruence differs from those in the direction(s) of $P$, and the coprincipal component at $w$ distinguishes the $P$-prime congruence at $w$ from those above it in the decomposition in Theorem~\ref{t:kmcong}.  We direct the unfamiliar reader to \cite[Example~1.3]{kmmeso} and the accompanying graphics, which are a particularly enlightening illustration of mesoprimary decomposition at the level of congruences.  
\end{remark}

\section{True witnesses of monoid congruences}
\label{s:truewitnesses}

Key witnesses (Definition~\ref{d:kmcong}) form a restricted class of witnesses sufficient for decomposing any monoid congruence, but the coprincipal components they cogenerate may still be redundant (Example~\ref{e:keynottrue}).  
In this section, we restrict further to the class of true witnesses (Definition~\ref{d:truewitness}), which are still sufficient for decomposing any congruence (Theorem~\ref{t:truedecomp}).  

\begin{example}\label{e:keynottrue}
Let $I = \<x^3 - xy^2, x^3(z - 1), x^2y - y^3, y^3(w - 1), x^4, y^4\> \subset \kk[x,y,z,w]$.  Its congruence $\til_I$ on $Q = \NN^4$ is depicted in Figure~\ref{fig:keynottrue}, projected onto the $xy$-plane.  The congruence $\til_I$ is $P$-primary for $\mathfrak m_P = \<x,y\>$ and has five Green's classes of key witnesses, namely those containing the monomials $x^2$, $y^2$, $x^3$, $y^3$, and $x^3y$, respectively.  Indeed, $x^2$ and $y^2$ are each key aides for the other, $wx^3$ is a key aide for $x^3$, $zy^3$ is a key aide for $y^3$, and $x^3y$ has nil as a key aide.  Of these, $x^2$ and $y^2$ yield redundant components in the coprincipal decomposition for $\til_I$ in Theorem~\ref{t:kmcong}, and the remaining three comprise a mesoprimary decomposition for $\til_I$ with no redundant components.  
\end{example}

\begin{figure}[tbp]
\begin{center}
\includegraphics[width=1.5in]{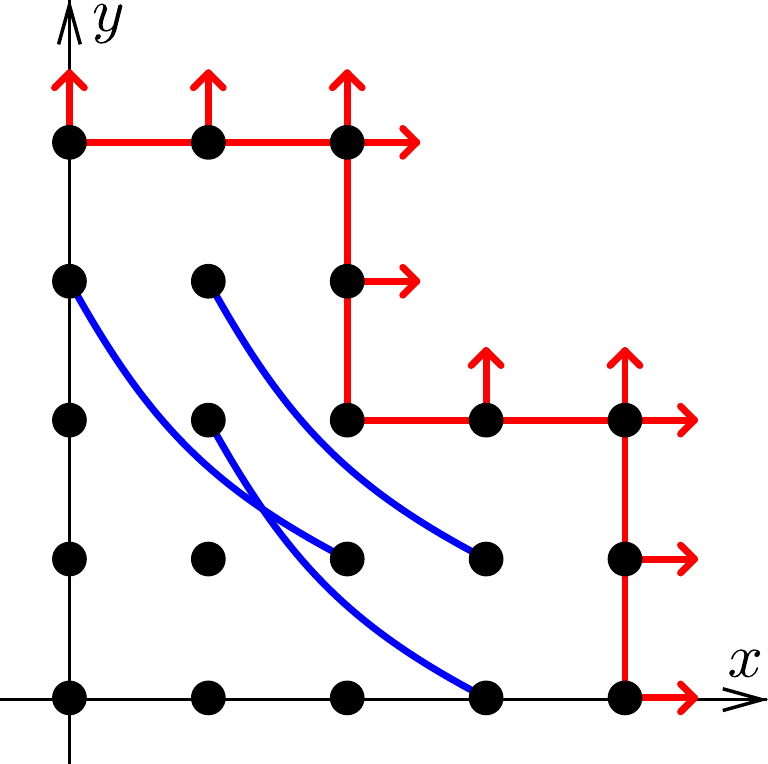}
\end{center}
\caption[A congruence with redundant key witnesses]{A congruence $\til$ on $\NN^4$ with key witnesses whose coprincipal components are redundant, projected onto the $xy$-plane.}
\label{fig:keynottrue}
\end{figure}

\begin{defn}\label{d:truewitness}
Fix a congruence $\til$ on $Q$, a prime $P \subset Q$, and an element $w \in Q$.  
\begin{enumerate}[(a)]
\item 
A \emph{$P$-cover congruence} of $w$ is the $P$-prime congruence at a non-nil element $w + p$ for some generator $p$ of $P$.  

\item 
The \emph{discrete testimony} of $w$ at $P$ is the set $T_P(w)$ of $P$-cover congruences of $w$.  The discrete testimony of $w$ is \emph{suspicious} if the common refinement of the $P$-cover congruences in the testimony coincides with the $P$-prime congruence at $w$.  

\item 
We say $w$ is a \emph{true witness} if either (i) $w$ is maximal among $\til$-witnesses for $P$, or (ii) the discrete testimony of $w$ is not suspicious.  

\item 
A $P$-prime congruence $\app$ is \emph{truly associated} to $\til$ if it is the $P$-prime congruence at a true $\til$-witness for $P$.  

\end{enumerate}
\end{defn}

\begin{example}\label{e:truewitnessconditions}
Conditions~(i) and~(ii) in Definition~\ref{d:truewitness}(c) are both necessary.  Indeed, 
$$I_1 = \<x^2 - xy, xy - y^2\> \subset \kk[x,y]$$
induces a congruence with two witnesses for the maximal prime $P$, both of which are maximal among witnesses for $P$ but neither of which has suspicious testimony since $\oQ$ has no nil element.  
Additionally, the congruence induced by
$$I_2 = \<z^4 - 1, x(z - 1), y(z^2 - 1), x^2, xy, y^2\> \subset \kk[x,y,z]$$
has three witnesses for the maximal prime $P$, one of which (the origin) has suspiciuos testimony but is not maximal among witnesses for $P$.  The congruences induced by $I_1$ and $I_2$ are depicted in Figure~\ref{fig:truewitnessconditions}.  
\end{example}

\begin{remark}\label{r:truevscharacter}
Character witnesses \cite[Definition~16.3]{kmmeso} are the binomial ideal analogues of true witnesses, except that their testimony is computed by intersecting ideals instead of refining congruences.  In general, however, character witnesses need not be true, and true witnesses need not be character.  Additionally, Corollary~\ref{c:keytrue} states that true witnesses are key, a fact that fails for character witnesses; see \cite[Examples~16.5--16.7]{kmmeso} for demonstration of this behavior.  
\end{remark}

\begin{figure}[tbp]
\begin{center}
\includegraphics[width=1.5in]{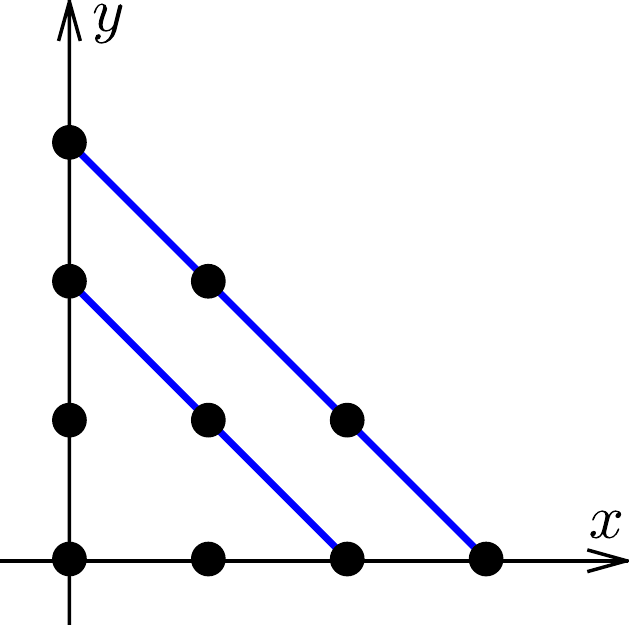}
\hspace{0.5in}
\includegraphics[width=1.5in]{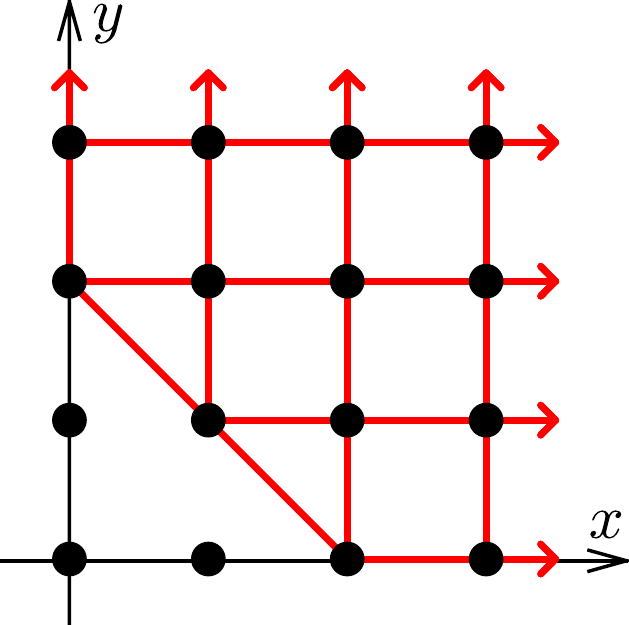}
\end{center}
\caption[Congruences with true witnesses]{Congruences for the ideals $I_1$ (left) and $I_2$ (right) in Example~\ref{e:truewitnessconditions}.}
\label{fig:truewitnessconditions}
\end{figure}

Proposition~\ref{p:trueaide} and Corollary~\ref{c:trueaide} each give an equivalent condition for identifying true witnesses that will be useful in proving Theorem~\ref{t:trulyassociatedcong}.  

\begin{prop}\label{p:trueaide}
Fix a congruence $\til$ on $Q$ and a witness $w$ for $P$.  The discrete testimony of $w$ is not suspicious if and only if $w$ has a key aide $w'$ that is either nil or generates the same ideal as $w$ in $Q_P$.  
\end{prop}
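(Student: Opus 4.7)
The plan is to rephrase both directions of the biconditional in terms of stabilizer subgroups in $G_P$.  By Definition~\ref{d:primecong}, the $P$-prime congruence at $q$ is completely determined by the subgroup $K_q^P \subset G_P$, so the common refinement of a family of such prime congruences corresponds to intersecting the stabilizers.  Therefore the testimony of $w$ is \emph{not} suspicious if and only if either (a)~every $w+p$ is nil (equivalently $T_P(w) = \varnothing$, whose common refinement is the coarse all-related congruence, necessarily distinct from the $P$-prime congruence at the non-nil witness $w$) or (b)~the intersection $\bigcap_p K_{w+p}^P$ (over generators $p$ with $w+p$ non-nil) strictly contains $K_w^P$.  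Note the always-valid inclusion $K_w^P \subseteq K_{w+p}^P$: a unit fixing $\ol w$ in $\oQ_P$ also fixes $\ol w + \ol p$.

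For the reverse direction, suppose $w$ has a key aide $w'$ of the prescribed form.  If $w' = \infty$, the aide equation $\ol{w+p} = \ol{w'+p} = \ol\infty$ forces every $w+p$ to be nil, placing us in case~(a).  Otherwise $w$ and $w'$ differ in $Q_P$ by a unit $g \in G_P$; the key aide condition $\ol{w+p} = \ol{w'+p}$ in $\oQ$---hence in $\oQ_P$---yields $g \in K_{w+p}^P$ for every non-nil $w+p$, while the ideal equality in $Q_P$ together with $\ol w \ne \ol{w'}$ in $\oQ$ forces $g \notin K_w^P$, placing us in case~(b).

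For the forward direction, if $T_P(w) = \varnothing$ then $w' = \infty$ is immediately a key aide.  Otherwise choose $g \in \bigcap_p K_{w+p}^P \minus K_w^P$ and write $g = \alpha - \beta$ with $\alpha, \beta \in Q \minus P$, which is possible because the complement of a monoid prime is a submonoid closed under taking summands.  The stabilization $\ol{w+p} + g = \ol{w+p}$ in $\oQ_P$ translates, via the definition of localization and after summing over the finitely many generators of~$P$, to a single element $f \in Q \minus P$ with
$$w + p + \alpha + f \sim w + p + \beta + f \qquad\text{in $Q$ for every generator $p$.}$$
Setting $c = \alpha + f$---after an additional adjustment absorbing $\beta + f$ into the stabilizer of each $\ol{w+p}$ in $\oQ$---the element $w' = w + c$ becomes a key aide for $w$, and since $c \in Q \minus P$ is a unit in $Q_P$, the elements $w$ and $w'$ generate the same ideal there.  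The main obstacle is this final construction: promoting the equalities from $\oQ_P$ back to $\oQ$ uniformly in $p$, while simultaneously ensuring $c \notin K_w^P$ so that the aide condition $\ol w \ne \ol{w'}$ survives, requires careful bookkeeping with the submonoid $Q \minus P$ and the $G_P$-action on~$\oQ_P$.
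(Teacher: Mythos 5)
Your reframing via the stabilizer subgroups $K_q^P$ is a faithful restatement of the paper's idea: the $P$-prime congruence at $q$ is encoded by $K_q^P$, and common refinement corresponds to intersection, so ``not suspicious'' becomes either (a)~an empty testimony or (b)~a strict containment $K_w^P \subsetneq \bigcap_p K_{w+p}^P$.  Your reverse direction (key aide $\Rightarrow$ not suspicious) is correct and matches the paper's argument in substance: the nil case empties $T_P(w)$, and in the Green's-class case the unit $g = w' - w$ lies in every $K_{w+p}^P$ but not in $K_w^P$.

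The forward direction, however, has a genuine gap that you yourself flag but do not close, and it is not merely bookkeeping.  After choosing $g = \alpha - \beta$ and a common clearing denominator $f$, you obtain $w + p + \alpha + f \sim w + p + \beta + f$ for every generator $p$.  Setting $w' = w + \alpha + f$ then gives $w' + p \sim w + p + \beta + f$, \emph{not} $w' + p \sim w + p$.  What you have actually produced is a pair of elements $a = w + \alpha + f$ and $b = w + \beta + f$ in the Green's class of $w$ in $Q_P$ which are mutual key witness and key aide for each other --- but neither of them is $w$, and the proposition requires a key aide \emph{for $w$ itself}.  The proposed fix (``absorbing $\beta + f$ into the stabilizer of $\ol{w+p}$'') is exactly the step that fails, because $\beta + f$ need not stabilize $\ol{w}$ nor $\ol{w+p}$ in $\oQ$.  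The paper sidesteps this by working directly in the localization: there one takes $w' = w + g \in Q_P$ (equivalently, the element determined by $w + u = w' + v$), for which $w' + p \sim w + p$ holds on the nose because $g \in K_{w+p}^P$, and $w' \nsim w$ because $g \notin K_w^P$.  Insisting that the aide be realized as $w + c$ with $c \in Q$ is both unnecessary (the statement only asks that $w'$ generate the same ideal as $w$ in $Q_P$) and unworkable as written, since it shifts the cover conditions by the extra summand $\beta + f$.
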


\begin{proof}
If $w$ has $\infty$ as a key aide, then its discrete testimony is empty.  If $w$ has a key aide $w'$ in its Green's class in $Q_P$, then each prime congruence in its discrete testimony identifies $w$ and $w'$, and thus so does their common refinement.  Either way, the discrete testimony of $w$ is not suspicious.  

Now suppose the discrete testimony of $w$ is not suspicious and that $\infty \in Q$ is not a key aide.  The set $T_P(w)$ is thus nonempty, and the common refinement of the prime congruences in $T_P(w)$ relates some $u$ and $v$ outside of $P$ that are not related under the prime congruence $\app$ at $w$.  This means any element $w'$ with $w + u = w' + v$ must satisfy $w + p = w' + p$ for each $p \in P$, making $w'$ a key aide for $w$.  
\end{proof}

\begin{cor}\label{c:trueaide}
The element $w$ in Proposition~\ref{p:trueaide} is a true witness if and only if $w$ either (i) is maximal among $P$-witnesses for $\til$, or (ii) has a key aide that generates the same ideal as $w$ in $Q_P$.  \qed
\end{cor}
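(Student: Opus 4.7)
The plan is to derive the corollary as an immediate logical simplification of Definition~\ref{d:truewitness}(c) combined with Proposition~\ref{p:trueaide}.  By the definition, $w$ is a true witness precisely when $w$ is maximal among $\til$-witnesses for~$P$, or its discrete testimony is not suspicious; substituting the characterization from Proposition~\ref{p:trueaide} gives three disjunctive possibilities: $w$ is maximal, $w$ admits $\infty$ as a key aide, or $w$ admits a key aide generating the same ideal as $w$ in~$Q_P$.  The corollary will therefore follow as soon as I can absorb the middle possibility into the first.

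To this end, I would verify that a $\til$-witness $w$ having $\infty$ as a key aide is automatically maximal among $\til$-witnesses for~$P$.  Inspecting Definition~\ref{d:kmcong}(a), the conditions for $\infty$ to be a key aide force $\ol w + \ol p = \ol\infty$ for every generator $p$ of~$P$, so $w$ is a cogenerator in the sense of Definition~\ref{d:kmcong}(b).  Any $v \in Q$ whose image $\ol v$ lies strictly above $\ol w$ in the Green's class order on $\oQ_P$ may be written $\ol v = \ol{w + u}$ for a non-unit $u \in Q_P$; such a $u$ must meet~$P$, and adding any generator of~$P$ to $w$ yields the nil class.  Since nil elements are not $\til$-witnesses for~$P$, this shows $w$ is maximal among such witnesses, absorbing the nil case of Proposition~\ref{p:trueaide} into condition~(i) of the corollary.

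The main point requiring care is this final maximality verification: one must pin down the precise order on witnesses relative to which ``maximal'' is intended in Definition~\ref{d:truewitness}(c), and then confirm that no $\til$-witness for~$P$ can lie strictly above a cogenerator in that order.  Once this compatibility between the cogenerator property and the poset of $\til$-witnesses is in place, the corollary is a direct rephrasing of Proposition~\ref{p:trueaide} through the dichotomy in Definition~\ref{d:truewitness}(c), and no further machinery is required.
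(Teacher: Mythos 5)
Your proposal is correct and follows the route the paper leaves implicit behind the \qed: substitute Proposition~\ref{p:trueaide} into Definition~\ref{d:truewitness}(c) and then observe that the ``nil key aide'' branch is subsumed by maximality. Your verification of that absorption is right — a nil key aide forces $\ol w + \ol p = \infty$ for every generator $p \in P$, so $w$ is a cogenerator; in $Q_P$ every non-unit lies in the maximal ideal $P_P$, so anything strictly above $\ol w$ in the divisibility order on $\oQ_P$ is nil and hence (since nil fails condition (iii) of Definition~\ref{d:kmcong}(a)) is not a witness, making $w$ maximal among $P$-witnesses. This is exactly the one nontrivial observation needed, and you correctly flag that ``maximal'' must be read in the Green's/divisibility preorder on $\oQ_P$ rather than in $Q$ itself, which is indeed the intended reading.
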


\begin{cor}\label{c:keytrue}
Every true $\til$-witness is a key $\til$-witness, and every truly associated prime congruence of $\til$ is associated to $\til$.  \qed
\end{cor}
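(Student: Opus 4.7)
My plan is to reduce both assertions to the single claim that every true $\til$-witness $w$ for $P$ admits a key aide. The second assertion then follows by a short definition-chase: by Definition~\ref{d:truewitness}(d), a truly associated $P$-prime congruence is the $P$-prime congruence at some true $\til$-witness $w$; promoting $w$ to a key witness then lets Definition~\ref{d:primecong}(c) declare the $P$-prime congruence at $w$ associated to $\til$.

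For the first assertion, I would invoke Corollary~\ref{c:trueaide}, which splits the hypothesis on $w$ into two cases. In case~(ii), the true witness $w$ comes equipped with a key aide generating the same ideal as $w$ in $Q_P$, and hence $w$ is immediately a key witness by Definition~\ref{d:kmcong}(b). In case~(i), where $w$ is maximal among $\til$-witnesses for $P$, I would show that the nil element $\infty$ serves as a key aide: for each generator $p$ of $P$, maximality should force $w + p$ to be nil modulo $\til$, since otherwise $w + p$ could be promoted to a $\til$-witness for $P$ strictly above $w$, contradicting maximality. Once $w + p$ is nil for every generator, the equalities $\ol w + \ol p = \ol\infty = \ol\infty + \ol p$ together with $\ol\infty \ne \ol w$ (using that $w$ is itself non-nil as a witness) exhibit $\infty$ as the required key aide.

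The principal obstacle is the maximality step in case~(i), namely verifying that a non-nil translate $w + p$ of a maximal $\til$-witness genuinely produces another $P$-witness at a strictly higher Green's class. I would handle this by transporting the aides of $w$ under translation by the generator $p$ and checking that the resulting elements retain both the inequality $\ol{a+p} \ne \ol{w+p}$ and the Green's-class-maximality requirement from Definition~\ref{d:kmcong}(a); once this transport is in place, the rest of the argument is routine unwinding of definitions, consistent with the statement being offered as an immediate corollary.
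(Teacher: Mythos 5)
Your reduction of the second assertion to the first is fine, and your handling of case~(ii) of Corollary~\ref{c:trueaide} is correct: a true witness with a key aide generating the same ideal in $Q_P$ is a key witness by Definition~\ref{d:kmcong}(b). However, your argument in case~(i) has a genuine gap, and the ``transport'' that you flag as the principal obstacle cannot be made to work. The defining equation of an aide $q$ for $w$ and a generator $p$ is precisely $\ol w + \ol p = \ol q + \ol p$, which says $\ol{q + p} = \ol{w + p}$. So translating $q$ by $p$ lands you \emph{in the same class} as $w+p$, and the translated element immediately fails the inequality requirement $\ol{q+p} \ne \ol{w+p}$ in Definition~\ref{d:kmcong}(a). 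In other words, the transport is not a detail to be checked --- it is explicitly defeated by the aide equation itself.

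More fundamentally, the conclusion you are aiming for --- that maximality forces $w + p$ to be nil for every generator $p$, so that $\infty$ is a key aide --- is false, and the paper's own Example~\ref{e:truewitnessconditions} ($I_1 = \<x^2 - xy, xy - y^2\>$) is a counterexample. There $\oQ$ has no nil element at all, the witnesses $(1,0)$ and $(0,1)$ are both maximal, and $(1,0)+(1,0) = (2,0)$ is certainly not nil; yet $(1,0)$ \emph{is} a key witness, with key aide $(0,1)$. So the corollary's conclusion holds in this example, but for a reason your argument does not produce, and your proposed route would derive a false intermediate claim. To close the gap you would need a different argument for why a witness that is maximal among all $\til$-witnesses for $P$ necessarily admits a single aide working simultaneously for every generator of $P$; showing $w+p$ is nil is neither true in general nor the mechanism at play.
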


We are now ready for the main result of this section.  Theorem~\ref{t:truedecomp} shows that when constructing an induced coprincipal decomposition for a given congruence, it suffices to consider true witnesses.  In particular, any component in the decomposition given in Theorem~\ref{t:kmcong} cogenerated by a non-true witness is redundant and can be omitted.  

\begin{thm}\label{t:truedecomp}
Fix a congruence $\til$.  Every congruence on $Q$ is the common refinement of the coprincipal congruences cogenerated by its true witnesses.  
\end{thm}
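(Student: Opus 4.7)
The plan is to verify both containments of the claimed equality. Since every true $\til$-witness is a key $\til$-witness by Corollary~\ref{c:keytrue}, Theorem~\ref{t:kmcong} immediately gives that $\til$ refines the common refinement $\til^\circ$ of the coprincipal components cogenerated by true $\til$-witnesses. So I focus on the reverse containment $\til^\circ \subseteq \til$, for which it suffices to show: whenever $a \nsim b$ in $Q$, some true $\til$-witness $w$ for some prime $P$ satisfies $a \nsim_w^P b$.

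The starting point is Corollary~\ref{c:kmcong}, which produces a prime $P \subset Q$ and an element $u \in Q$ such that $w_0 := a + u$ is a key $\til$-witness for $P$ with key aide $b + u$; inspecting this construction together with Definition~\ref{d:kmcong}(d) yields $a \nsim_{w_0}^P b$ (indeed, $\til_{w_0}^P$ is a congruence separating its cogenerator $w_0 = a+u$ from the key aide $b+u$, so it cannot identify $a$ with $b$ either). If $w_0$ happens to be true, we are done. Otherwise both clauses of Definition~\ref{d:truewitness}(c) fail, so $w_0$ is not maximal among $\til$-witnesses for $P$ and, by Proposition~\ref{p:trueaide}, no key aide of $w_0$ is nil or in the Green's class of $\ol{w_0}$ in $\oQ_P$.

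The main step is then a lifting argument: under the failure of both trueness conditions, produce a key $\til$-witness $w_1$ with $\ol{w_1}$ strictly above $\ol{w_0}$ in the Green's preorder on $\oQ$ and with $a \nsim_{w_1}^P b$. Iterating and invoking noetherianity of $Q$ (which bounds ascending chains of Green's classes of non-nil elements, hence of witnesses, in $\oQ$) produces a key $\til$-witness $w$ that is maximal among $\til$-witnesses for $P$ and still satisfies $a \nsim_w^P b$; any such maximal witness is automatically true by Definition~\ref{d:truewitness}(c)(i), completing the argument.

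The hard part is the lifting step. The crucial ingredient is that suspicious testimony of $w_0$ encodes the coincidence between the $P$-prime congruence at $w_0$ and the common refinement of the $P$-prime congruences at its non-nil covers $w_0 + p$, for $p$ ranging over generators of $P$. Combined with the non-maximality of $w_0$, this coincidence should permit the construction of $w_1$ of the form $a + u'$ with $\ol{a+u'}$ strictly above $\ol{a+u}$ in $\oQ$, inheriting a key aide constructed from $b + u$; the suspiciousness ensures that the relevant aide relations at $w_0$ already propagate to elements strictly above $w_0$. A careful case analysis, depending on whether case~(i) or case~(ii) of Definition~\ref{d:kmcong}(d) is responsible for separating $a$ from $b$ under $\til_{w_0}^P$, will pinpoint how to choose $u'$ so that the same case separates $a$ from $b$ under $\til_{w_1}^P$.
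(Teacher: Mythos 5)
Your easy direction is correct: since true witnesses are key (Corollary~\ref{c:keytrue}), the intersection over true-witness components is coarser than the full key decomposition of Theorem~\ref{t:kmcong}, so it suffices to show that whenever $a \nsim b$ some true witness $w$ satisfies $a \nsim_w^P b$. The problem is the lifting step, which you acknowledge is the hard part but which, as you have formulated it, does not go through. You ask for a key witness $w_1$ for \emph{the same prime} $P$, strictly above $w_0$ in the Green's preorder, still separating $a$ from $b$. The paper's proof shows that this is the wrong invariant to push upward: it splits on whether the pair $q,q'$ not identified by $\til_w^P$ lies in the same Green's class of $Q_P$ or not. In the different-class case it does exactly what your Case (i) would do --- pass to a maximal (hence true) witness $v$ above $w$ for the same $P$, using that the nil class of $\til_v^P$ sits properly inside that of $\til_w^P$. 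But in the same-class case it translates so that $q' = w$, notes that non-trueness forces $w + p \nsim q + p$ for some generator $p$ of $P$ (via Corollary~\ref{c:trueaide}: $q$ cannot be a key aide for $w$), and then appeals to Theorem~\ref{t:kmcong} for the pair $(w+p, q+p)$. The component that this produces may be cogenerated by a witness for a \emph{different} prime, and need not be anything like a lift of $w_0$ for $P$. Nothing in the suspicious-testimony hypothesis lets you stay within the $P$-witness poset above $w_0$, which is what your sketch requires.

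In short, your iteration is sound only when the pair being separated changes Green's class, and you are missing the reduction that handles the within-class case by moving to a strictly higher pair $(w+p, q+p)$ rather than a strictly higher witness, after which the prime may change. That reduction is what makes the Noetherian termination actually apply. Until that step is supplied, the proposal has a genuine gap.
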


\begin{proof}
Fix a congruence $\til$ on $Q$ and a key witness $w$ for $P$ that is not true.  In order to prove the congruence $\til_w^P$ is redundant in the decomposition $\til = \bigcap_i \til_i$ of Theorem~\ref{t:kmcong}, it suffices to produce, for $q, q' \in Q$ not identified under $\til_w^P$, a component~$\til_j \ne \til_w^P$ not identifying $q$ and $q'$.  Since primary decomposition of monoid congruences commutes with localization \cite[Theorem~3.12]{kmmeso}, it suffices to assume that $Q = Q_P$.  

First, suppose $q$ and $q'$ lie in distinct Green's classes in $Q$.  Since $w$ is not true, it is not maximal, so some maximal witness $v$ for $P$ lies above $w$.  The nil class of~$\til_v^P$ is properly contained in the nil class of $\til_w^P$, so $q$ and $q'$ are not both nil under $\til_v^P$.  Furthermore, outside of its nil class, $\til_v^P$ does not relate any elements that lie in separate Green's classes.  In particular, $\til_v^P$ does not relate $q$ and $q'$.  

Next, suppose $q$ and $q'$ lie in the same Green's class in $Q_P$.  Since $q$ and $q'$ are not both nil modulo $\til_w^P$, there exists $u \in Q$ such that $q + u$ and $q' + u$ are in the same Green's class as $w$.  Furthermore, any component that does not relate $q + u$ and $q' + u$ will not relate $q$ and $q'$, so upon replacing $q$ with $q + u$ and $q'$ with $q' + u$, it suffices to assume $u = 0$ and $q' = w$.  Since $w$ is not a true witness, $q$ is not a key aide for $w$, so $w + p \nsim q + p$ for some generator $p \in P$.  This means some component $\til_j$ does not relate $w + p$ and $q + p$, and thus does not relate $w$ and $q$, as desired.  
\end{proof}

\section{Irredundant mesoprimary decompositions of congruences}%
\label{s:minminimalcong}

In this section, we prove that each truly associated prime congruence of a given congruence $\til$ appears as the associated prime congruence of some mesoprimary component in every mesoprimary decomposition for $\til$ (Theorem~\ref{t:trulyassociatedcong}).  As a consequence, we prove that any congruence with no embedded associated monoid primes possesses both a unique minimal mesoprimary decomposition and a unique irredundant mesoprimary decomposition (Corollary~\ref{c:irredundantcong}).  Making statements about ``all'' mesoprimary decompositions necessitates some mild restrictions; see Remark~\ref{r:induced} and \cite[Example~8.2]{kmmeso}.  

\begin{defn}[{\cite[Definition~8.1]{kmmeso}}]\label{d:kmmesodecomp}
An expression $\til = \bigcap_i \app_i$ of a congruence~$\til$ as a common refinement of mesoprimary congruences is a \emph{mesoprimary decomposition} if, for each~$\app_i$ with associated prime $P_i$, the $P_i$-prime congruences of~$\til$ and~$\app_i$ at each cogenerator for~$\app_i$ coincide.  This decomposition is \emph{key} if every cogenerator for every~$\app_i$ is a key witness for~$\til$.
\end{defn}

\begin{remark}
Theorems~\ref{t:kmcong} and~\ref{t:truedecomp} both yield key mesoprimary decompositions.  
\end{remark}

\begin{defn}\label{d:congmesodecomp}
A mesoprimary decomposition $\til = \bigcap_i \til_i$ is 
\begin{enumerate}[(a)]

\item 
\emph{induced} if each $\til_i$ is a common refinement of coprincipal components;

\item 
\emph{minimal} if $\til_i$ and $\til_j$ have distinct associated prime congruences for $i \ne j$; or

\item 
\emph{irredundant} if no $\til_i$ can be omitted.  

\end{enumerate}
\end{defn}

\begin{remark}\label{r:induced}
The coprincipal component $\til_w^P$ of a congruence $\til$ at a witness $w$ for $P$ is determined by the congruence $\til$.  More precisely, it is the finest coprincipal congruence with cogenerator $w$ that can appear in a mesoprimary decomposition for~$\til$.  As~such, for the purpose of minimality, we restrict our attention to induced mesoprimary decompositions.  Indeed, if the induced condition is relaxed, coprincipal components whose cogenerator is a non-key $\til$-witness need not be redundant; see Example~\ref{e:congredundancy}.  
\end{remark}

\begin{example}\label{e:congredundancy}
The ideal $I = \<x^3y - x^2y^2, x^2y^2 - xy^3, x^5, y^5\>$ is the intersection of 
$I_1 = \<x^3y - x^2y^2, x^2y^2 - xy^3, x^4, y^4\>$ and $I_2 = \<x^2y - xy^2, x^5, y^5\>$.  
Their congruences $\til$, $\til_1$, and $\til_2$, respectively, are depicted in Figure~\ref{fig:ex202abc}.  
Both $\til$ and $\til_2$ are coprincipal with cogenerator $(4,1)$, but $\til_2$ is not 
the coprincipal component cogenerated by $(4,1)$ since it also identifies $(2,1)$ and $(1,2)$.  
As such, this mesoprimary decomposition is not induced.  Additionally, 
$\til_1$ is cogenerated by a non-key non-character witness for $\til$, 
but neither component of this mesoprimary decomposition can be omitted.  
\end{example}

\begin{figure}[tbp]
$\begin{array}{ccccc}
\includegraphics[width=1.2in]{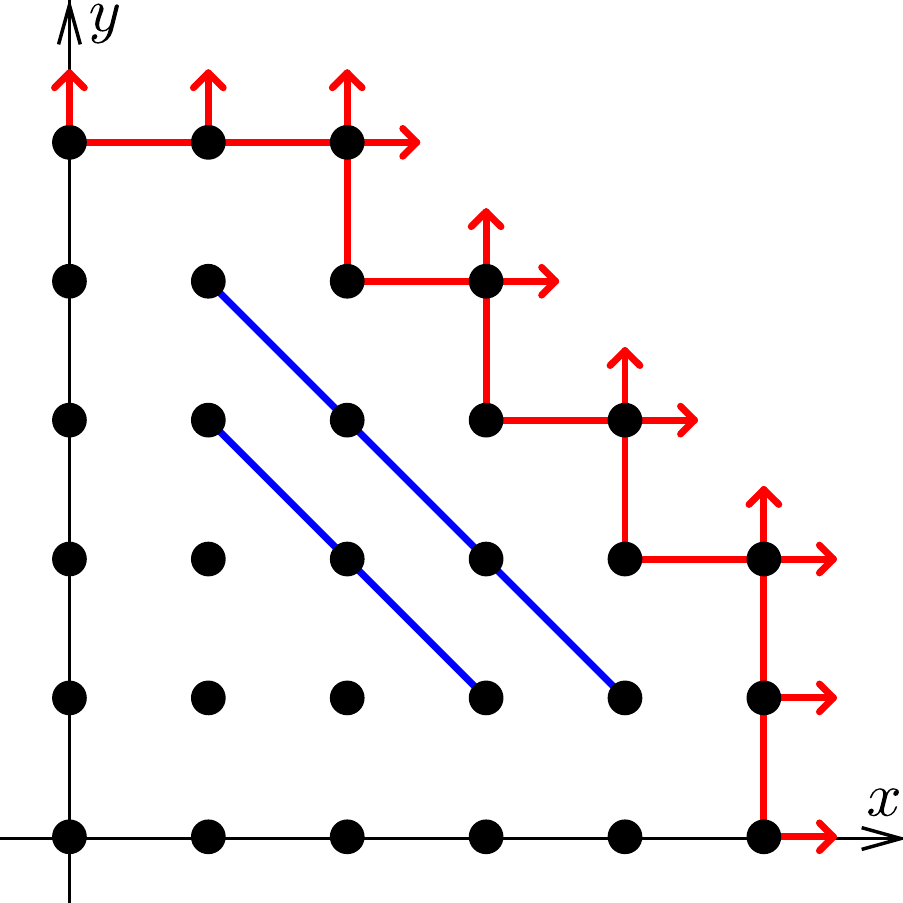}
&  & 
\includegraphics[width=1.2in]{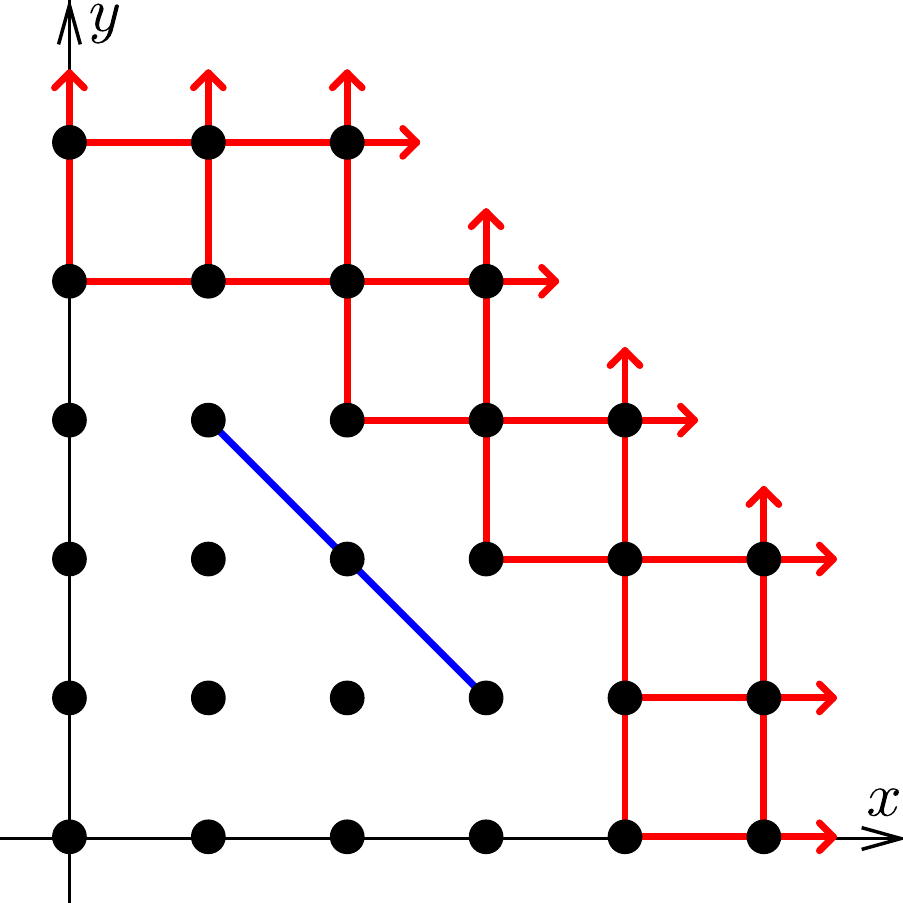}
&  & 
\includegraphics[width=1.2in]{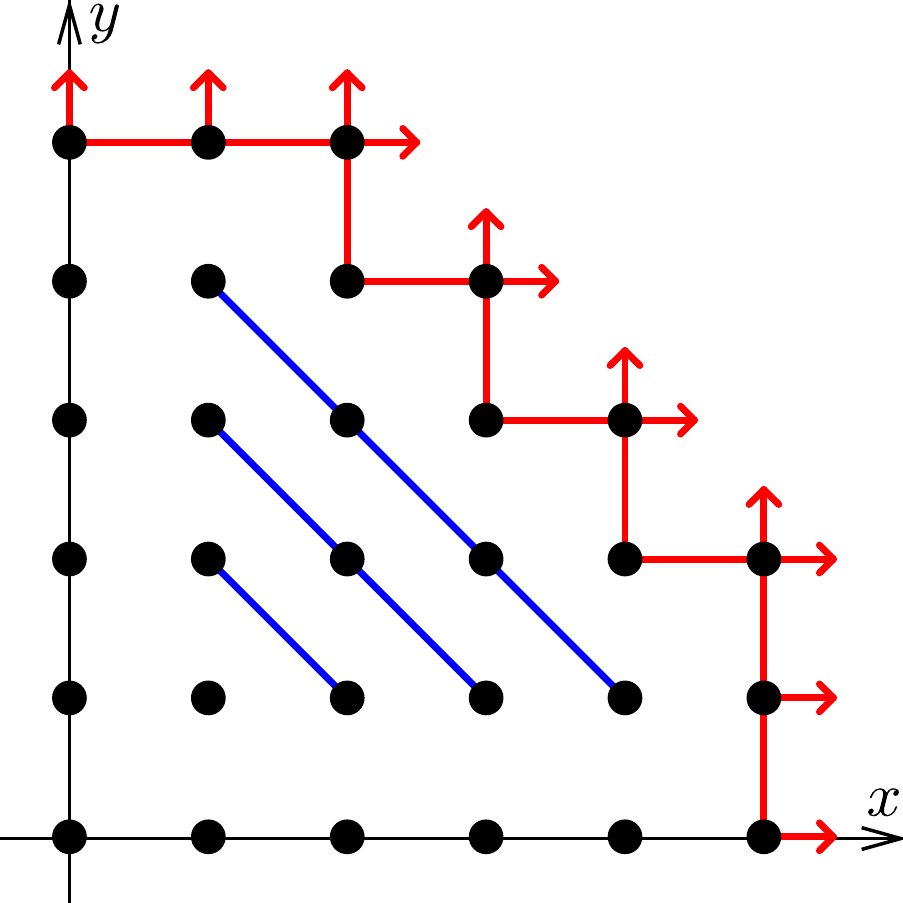}
\\
\til & = & \til_1 & \cap & \til_2
\end{array}$
\caption[A non-induced mesoprimary decomposition with a non-redundant component cogenerated by a witness that is not key]{The non-induced mesoprimary decomposition from Example~\ref{e:congredundancy}.}
\label{fig:ex202abc}
\end{figure}

An important observation is that any witness whose discrete testimony is not suspicious must appear as a cogenerator in every mesoprimary decomposition.  Notice the absence of ``induced'' here; we do indeed mean \emph{every} mesoprimary decomposition.  We~record this fact in Lemma~\ref{l:truecogen}, which serves as the foundation for Theorem~\ref{t:trulyassociatedcong}.  

\begin{lemma}\label{l:truecogen}
Fix a mesoprimary decomposition $\til = \bigcap_i \til_i$, and a $\til$-witness $w$ for $P$.  
If the discrete testimony of $w$ is not suspicious, then $w$ is a cogenerator for some $\til_i$.  
\end{lemma}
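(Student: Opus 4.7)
The plan is to apply Proposition~\ref{p:trueaide} to produce a key $\til$-aide $w'$ for $w$ with either $w' = \infty$ (Case~A) or $w,w'$ in the same Green's class in $Q_P/\til$ (Case~B), and then to pinpoint a component $\til_j$ of the decomposition having $w$ as a cogenerator.

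Since $w \nsim w'$ and $\til = \bigcap_i \til_i$, some component $\til_j$ satisfies $w \nsim_j w'$; let $P_j$ be its associated prime. The first step is to deduce $P \subseteq P_j$. The key-aide identities $w + p \sim w' + p$ for each generator $p$ of $P$ hold in $\til$ and hence in $\til_j$. If any such generator $p$ lay outside $P_j$, then $P_j$-primarity of $\til_j$ would force $p$ to be cancellative in $Q/\til_j$ and thus yield $w \sim_j w'$, a contradiction. Hence every generator of $P$ lies in $P_j$, and in particular $w'$ remains a $\til_j$-aide of $w$ with respect to the generators of $P$.

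The main task is to verify that $w$ is a cogenerator of $\til_j$, i.e., a key $\til_j$-witness for $P_j$ with $w + p^* \sim_j \infty$ for every $p^* \in P_j$. In Case~A, $w' = \infty$ directly provides a nil key $\til_j$-aide of $w$ for the generators of $P$, and the plan is to extend to all of $P_j$ using nilpotency of $P_j$-elements in the $P_j$-mesoprimary $\til_j$ together with the cogenerator compatibility condition of Definition~\ref{d:kmmesodecomp}. In Case~B, where $w'$ is not nil, the idea is to exploit that $w$ and $w'$ differ by a unit in $Q_P/\til$ yet are separated in $\til_j$ and argue, via the partly cancellative / mesoprimary structure of $\til_j$, that the $P_j$-orbit of $w$ in $Q/\til_j$ collapses into the nil class, placing $w$ in the cogenerator position. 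I anticipate Case~B to be the main obstacle: without an obvious nil element at hand, one must carefully combine the decomposition's compatibility condition at cogenerators of $\til_j$ with the partly cancellative structure of $\til_j$ to conclude the cogenerator property, and possibly refine the initial choice of $\til_j$ so that the $P_j$-prime congruence of $\til$ at $w$ actually matches the associated prime congruence of $\til_j$.
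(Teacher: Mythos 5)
Your setup is right: invoke Proposition~\ref{p:trueaide} to split into the two cases, pick a component $\til_j$ that fails to relate $w$ and its key aide $w'$, and deduce $P \subseteq P_j$ from cancellativity of elements outside $P_j$. But the substance of the lemma is precisely the step you label ``the main obstacle'' in Case~B, and what you offer there is not an argument --- it is a list of ingredients (partly cancellative, mesoprimary, the compatibility condition of Definition~\ref{d:kmmesodecomp}, possibly changing $\til_j$) without a derivation. In particular, the suggestion that you may need to re-choose $\til_j$ so that the $P_j$-prime congruence of $\til$ at $w$ matches the associated prime congruence of $\til_j$ is a red herring: no such refinement is needed.

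The missing idea is short. In Case~B, $w' = w + u$ for some unit $u \in G_P$, and for each generator $p \in P$ one has $w + p \sim w' + p$, hence $w + p \sim_j w' + p$. Since $w \nsim_j w'$ and $w' = w + u$, neither $w$ nor $w'$ is nil modulo $\til_j$. Now use that $\til_j$ is mesoprimary in the form of Remark~\ref{r:mesoprimaryequiv} (equivalently \cite[Corollary~6.7]{kmmeso}): all non-nil elements of $Q/\til_j$ have the \emph{same} $P_j$-prime congruence. If $w + p$ were non-nil modulo $\til_j$, then $u$ stabilizes its class, so $u$ lies in the common stabilizer, and therefore $u$ stabilizes the class of $w$ as well --- giving $w \sim_j w + u = w'$, a contradiction. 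Hence $w + p$ is nil modulo $\til_j$ for every generator $p$, which is exactly the cogenerator condition. Your Case~A is also more roundabout than necessary: there $w$ is already a cogenerator of $\til$, so any component under which $w$ is non-nil has $w$ as a cogenerator directly, without any ``extension to $P_j$'' step.
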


\begin{proof}
Let $\app$ denote the $P$-prime congruence at $w$, and let $\app_i$ denote the prime congruence associated to $\til_i$ for each $i$.  
By Proposition~\ref{p:trueaide}, either $w$ has $\infty$ as a key aide, or $w$ has a key aide $w'$ that is Green's equivalent to $w$ in the localization $Q_P$.  If $w$ has $\infty$ as a key aide, then it is a cogenerator for $\til$, so any mesoprimary component $\til_i$ under which $w$ is not nil also has $w$ as a cogenerator.  

Alternatively, suppose $w$ has a key aide $w'$ in the same Green's class as $w$ in $Q_P$.  Since $w \nsim w'$, some mesoprimary component $\til_i$ does not relate $w$ and $w'$.  Neither $w$ nor $w'$ is nil under $\til_i$, but for each generator $p$ of $P$, the prime congruence at $w + p$ relates $w$ and $w'$.  This means each $w + p$ must be nil under $\til_i$ because $\til_i$ is mesoprimary, so $w$ is a cogenerator for $\til_i$.  
\end{proof}

The symmetry in Example~\ref{e:symmetry}, which also appeared as \cite[Example~2.19]{kmmeso}, demonstrates that Lemma~\ref{l:truecogen} cannot be generalized to arbitrary true witnesses, as eliminating all redundancy sometimes requires making arbitrary choices.  That said, Lemma~\ref{l:maximalwitness} demonstrates that the phenomenon in Example~\ref{e:symmetry} is the only possible obstruction.  

\begin{example}\label{e:symmetry}
Let $I = \<x^2 - xy, xy - y^2\> \subset \kk[x,y]$.  The congruence $\til_I$ has two associated primes, namely $\emptyset$ and the maximal ideal $P$.  Theorem~\ref{t:kmcong} produces the coprincipal decomposition 
$$I = \<x^2 - xy, xy - y^2\> = \<x^2,y\> \cap \<x,y^2\> \cap \<x - y\>.$$
The first two components are $P$-primary, and the third is $\emptyset$-primary.  Either, but not both, of the first two components can be omitted without affecting the intersection, even though each is cogenerated by a true witness for $\til_I$.  
\end{example}

\begin{lemma}\label{l:maximalwitness}
Fix a congruence $\til$, a key $\til$-witness $w$ for $P$, and a key aide $w'$ for~$w$.  If $w$ is a maximal witness for $P$, then every mesoprimary decomposition $\til = \bigcap_i \til_i$ has a component with either $w$ or $w'$ as a cogenerator.  
\end{lemma}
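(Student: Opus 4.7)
The plan is to find a single component $\til_j$ of the given decomposition that separates $w$ and $w'$, and argue that $w$ or $w'$ must appear as a cogenerator of that $\til_j$.  Since $w \nsim w'$, some component $\til_j$ satisfies $w \nsim_j w'$; after possibly swapping $w$ and $w'$, I would assume $w$ is non-nil modulo $\til_j$.  Writing $P_j$ for the associated prime of $\til_j$, the first step is $P \subseteq P_j$: for each generator $p$ of $P$, the relation $w + p \sim w' + p$ descends to $\til_j$, and if $p$ were not in $P_j$, then $\ol p$ would be cancellative in $\oQ_j$ (as $\til_j$ is $P_j$-primary), forcing $w \sim_j w'$, a contradiction.

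The aim is then to show that $w + p_j$ is nil modulo $\til_j$ for every generator $p_j$ of $P_j$, which---together with $\infty$ serving as a key aide---makes $w$ a cogenerator of $\til_j$.  For generators $p \in P$, if $w + p$ is non-nil in $\til_j$, then partial cancellativity of $\ol p$ in the mesoprimary quotient $\oQ_j$ places $\ol w$ and $\ol{w'}$ in the same Green's class in $(\oQ_j)_{P_j}$, and in this subcase the argument from the proof of Lemma~\ref{l:truecogen} adapts inside $\til_j$ to show $w$ is a cogenerator outright; otherwise $w + p$ is nil for each $p \in P$.  For generators $p_j \in P_j \minus P$, I invoke the maximality of $w$: let $m \geq 1$ be minimal with $w + m p_j$ nil in $\til_j$, and suppose toward contradiction that $m \geq 2$.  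Setting $v := w + (m-1)p_j$ and $v' := w' + (m-1)p_j$, translating $w + p \sim w' + p$ by $(m-1)p_j$ gives $v + p \sim v' + p$ for every generator $p$ of $P$; the non-equivalence $v \nsim v'$ is immediate when $w'$ is nil in $\til_j$ (then $v'$ is nil but $v$ is not), and otherwise $v \sim_j v'$ would, by partial cancellativity of $\ol{(m-1) p_j}$ in $\oQ_j$, force $\ol w, \ol{w'}$ into the Green's-class configuration already dispatched.  Combined with the aide-maximality of $v'$ inherited from that of $w'$, this makes $v$ a key $\til$-witness for $P$.  A $J$-equivalence between $\ol v$ and $\ol w$ in $\oQ$ would yield $c \in Q$ with $v + c \sim w$, whence $w + p_j \sim_j (v + c) + p_j = w + m p_j + c$ would be nil in $\til_j$, contradicting the minimality of $m$.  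Thus $\ol v \succ \ol w$ strictly in the Green's order on $\oQ$, contradicting the maximality of $w$; hence $m = 1$.

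The principal obstacle is the Green's-class subcase: the adaptation of the proof of Lemma~\ref{l:truecogen} inside $\til_j$ must account for the possible strict containment $P \subsetneq P_j$ and the behavior of stabilizers in $(\oQ_j)_{P_j}$ relative to those in $(\oQ)_P$.  Once that subcase is handled, the bootstrap from ``$w + p$ nil for $p \in P$'' to ``$w + p_j$ nil for all generators of $P_j$'' via the maximality hypothesis is precisely what distinguishes this lemma from Lemma~\ref{l:truecogen}: it is the maximality of $w$ (and not merely that $w$ is a key witness) that rules out the offending element $v = w + (m-1) p_j$.
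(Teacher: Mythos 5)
Your approach diverges substantially from the paper's, and the divergence is the source of the difficulties you yourself flag.  The paper's first move is to invoke \cite[Theorem~3.12]{kmmeso} to localize at $P$, so that $P$ becomes maximal.  After that, a component $\til_i$ separating $w$ from $w'$ must be \emph{exactly} $P$-primary: for any $P' \subsetneq P$ and any $p \in P \setminus P'$, the element $p$ is cancellative modulo a $P'$-primary congruence, so $w + p \sim_i w' + p$ would force $w \sim_i w'$.  With $P_i = P$ in hand, the paper splits on whether $w'$ is nil, Green's-equivalent to $w$ in $Q_P$, or neither; the middle case is Lemma~\ref{l:truecogen} verbatim, and the last is handled directly from maximality of $w$.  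Because you do not localize, you are stuck with $P \subseteq P_j$ possibly strict, and you have to invent both the Green's-class adaptation and the $m$-minimality bootstrap to compensate --- neither of which appears in the paper, and neither of which closes cleanly.

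Two concrete gaps.  First, in the ``$p \in P$'' subcase you assert that partial cancellativity of $\ol p$ together with $w + p \sim_j w' + p \ne \infty$ ``places $\ol w$ and $\ol{w'}$ in the same Green's class in $(\oQ_j)_{P_j}$.''  That is backwards: partial cancellativity says that \emph{if} $\ol w$ and $\ol{w'}$ differ by a unit and $w + p \sim_j w' + p$ is non-nil, \emph{then} $w \sim_j w'$.  Since $w \nsim_j w'$, the correct conclusion when $w + p$ is non-nil is that $\ol w$ and $\ol{w'}$ are \emph{not} Green's-equivalent in $(\oQ_j)_{P_j}$, which defeats the proposed adaptation of Lemma~\ref{l:truecogen}.  (And even setting that aside, Lemma~\ref{l:truecogen} requires $w'$ to be Green's-equivalent to $w$ in $Q_P$ \emph{modulo $\til$}, not modulo $\til_j$ --- a different hypothesis that does not follow from the one you produce.)  Second, in the $m$-minimality step you assert that ``the aide-maximality of $v'$ is inherited from that of $w'$.''  Condition~(iii) of Definition~\ref{d:kmcong}(a) --- that $\ol{w'}$ is not strictly below $\ol w$ --- need not survive translation by $(m-1)p_j$: translation preserves $\preceq$ but not $\not\preceq$, so $\ol{v'}$ could fall strictly below $\ol v$ even when $\ol{w'}$ was incomparable to or above $\ol w$.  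Without that, $v$ need not be a $\til$-witness, and the contradiction with maximality of $w$ does not go through.  Localizing at $P$ at the outset, as the paper does, sidesteps both problems entirely.
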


\begin{proof}
Suppose $w$ is maximal among $\til$-witnesses for $P$.  Since primary decomposition of congruences commutes with localization by \cite[Theorem~3.12]{kmmeso}, it suffices to replace $Q$ with $Q_P$, so that $P$ is maximal.  If $w'$ is nil, then $w$ is a cogenerator for $\til$, so it is a cogenerator for any $P$-primary component $\til_i$ under which it is not nil.  If, instead, $w'$ lies in the same Green's class as $w$ in $Q_P$, then we are done by Lemma~\ref{l:truecogen}.  Lastly, assume $w'$ is not nil and lies in a different Green's class in $Q_P$.  Since $w \nsim w'$, some component $\til_i$ separates $w$ and $w'$.  Localization $Q$ at any prime $P'$ properly contained in $P$ identifies $w$ and $w'$ since $w + p = w' + p$ for any $p \in P \setminus P'$.  This means any $P'$-primary component also identifies $w$ and $w'$, so $\til_i$ must be $P$-primary.  Since $w$ is maximal among witnesses for $P$, it is either a cogenerator for $\til_i$ or nil modulo $\til_i$; the latter implies that $w'$ is a cogenerator for $\til_i$.  In either case, the proof is complete.  
\end{proof}

\begin{thm}\label{t:trulyassociatedcong}
Fix a congruence $\til$, a true $\til$-witness $w$ for a prime $P$, and let $\app$ denote the $P$-prime congruence at $w$.   
\begin{enumerate}[(a)]
\item 
If either (i)~the discrete testimony of $w$ is not suspicious, or (ii) the $P$-prime congruence at some non-nil key aide $w'$ for $w$ equals~$\app$, then $\app$ appears as the associated prime congruence of some mesoprimary component
in each mesoprimary decomposition $\bigcap_i \til_i$ of $\til$.  

\item 
If $w$ satisfies neither (i) nor (ii), then the component in the coprincipal decomposition in Theorem~\ref{t:truedecomp} 
with cogenerator $w$ is redundant.  

\end{enumerate}
\end{thm}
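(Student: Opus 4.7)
The plan is to handle parts~(a) and~(b) separately, leaning on Lemmas~\ref{l:truecogen} and~\ref{l:maximalwitness}, Proposition~\ref{p:trueaide}, and Corollary~\ref{c:trueaide}.

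For part~(a), case~(i), Lemma~\ref{l:truecogen} directly supplies a mesoprimary component $\til_i$ in which $w$ is a cogenerator; Definition~\ref{d:kmmesodecomp} then identifies the associated prime congruence of $\til_i$ with the $P_i$-prime congruence of $\til$ at $w$, where $P_i$ is the associated prime of $\til_i$. What remains is to verify $P_i = P$: the inclusion $P \subseteq P_i$ follows from the proof of Lemma~\ref{l:truecogen}, since $w+p$ is nil modulo $\til_i$ for each generator $p \in P$; the reverse inclusion should follow from the fact that $w$ is a $\til$-witness for $P$ combined with the compatibility of primary decomposition with localization (\cite[Theorem~3.12]{kmmeso}).

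For part~(a), case~(ii), the non-nil key aide $w'$ satisfies $\app_P^\til(w') = \app$. Since $w \not\sim_\til w'$, some component $\til_i$ separates them; reproducing the localization argument in the proof of Lemma~\ref{l:maximalwitness} forces $\til_i$ to be $P$-primary, as any prime $P' \subsetneq P$ identifies $w$ and $w'$ after localization (using $w+p = w'+p$ for $p \in P \setminus P'$ and that such $p$ is a unit in $Q_{P'}$). If case~(i) also holds, that case applies; otherwise Proposition~\ref{p:trueaide} and Corollary~\ref{c:trueaide} force $w$ to be maximal among $\til$-witnesses for $P$, so Lemma~\ref{l:maximalwitness} yields a component whose cogenerator is either $w$ or $w'$. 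Its associated prime congruence, being the $P$-prime congruence at the cogenerator, equals $\app$ in either subcase.

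For part~(b), the failure of both~(i) and~(ii) together with Corollary~\ref{c:trueaide} forces $w$ to be maximal among $\til$-witnesses for $P$. To show $\til_w^P$ is redundant in the decomposition of Theorem~\ref{t:truedecomp}, I must exhibit, for each pair $(a,b)$ with $a \not\sim_w^P b$, some true witness $v \ne w$ with $a \not\sim_v^{P_v} b$; Theorem~\ref{t:truedecomp} automatically provides \emph{some} true witness separating the pair (since $a \not\sim_w^P b$ implies $a \not\sim_\til b$), so the task reduces to ensuring it differs from $w$. I expect to proceed by case analysis dictated by the coprincipal description in Definition~\ref{d:kmcong}(d), splitting on whether each of $\ol a$ and $\ol b$ divides $\ol w$ in $\oQ_P$. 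The main obstacle is the subcase in which $(a,b)$ reduces via translation to $(w,w')$ for a key aide $w'$ of $w$: here the failure of~(ii), namely $\app_P^\til(w') \ne \app$, together with the suspiciousness of $w$'s testimony---which equates $\app$ with the common refinement of the $P$-cover congruences---should supply a true witness distinct from $w$ whose coprincipal component separates $w$ and $w'$, establishing the redundancy.
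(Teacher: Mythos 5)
Part~(a) of your argument is correct and follows the paper's route: Lemma~\ref{l:truecogen} handles case~(i), and the dichotomy from Corollary~\ref{c:trueaide} (suspicious testimony together with truth forces maximality) reduces case~(ii) to Lemma~\ref{l:maximalwitness}; the cogenerator thus produced is either $w$ or $w'$, and by hypothesis both have $P$-prime congruence $\app$. Your extra step verifying $P_i = P$ is reasonable elaboration of what the paper leaves implicit.

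Part~(b) contains a genuine gap. You correctly observe that failure of both~(i) and~(ii) forces $w$ to be maximal, and you isolate the critical subcase where a pair $(a,b)$ reduces by translation to $(w,w')$; but you write that the failure of~(ii) and the suspiciousness of $w$'s testimony \emph{should} supply a true witness distinct from $w$, without actually proving it. The missing ingredient, which the paper supplies, is that $w$ and its key aide $w'$ have \emph{identical discrete testimony}, because $\ol w + \ol p = \ol{w'} + \ol p$ for every generator $p$ of $P$. Consequently the common refinement of $w'$'s $P$-cover congruences equals that of $w$'s, which is $\app$ by suspiciousness; since the $P$-prime congruence at $w'$ is not $\app$ by failure of~(ii), the testimony of $w'$ is \emph{not} suspicious, so $w'$ is a true witness whose component $\til_{w'}^P$ separates the pair, and $w' \ne w$ because $w$ fails~(i). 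Moreover, the paper's proof of~(b) does not run the divisibility case analysis you sketch; instead it applies Corollary~\ref{c:kmcong} directly to an arbitrary pair $a \nsim b$, obtaining a key witness $a+u$ with key aide $b+u$ at a possibly different prime, and shows that one of $a+u$, $b+u$ has non-suspicious testimony --- hence satisfies~(i) of Definition~\ref{d:truewitness}(c) and therefore cannot be $w$. Reorganizing along those lines both closes the gap and eliminates the need for the case analysis.
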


\begin{proof}
If the discrete testimony of $w$ is not suspicious, then apply 
Lemma~\ref{l:truecogen}.  On the other hand, if $w$ has a key aide $w'$ 
whose prime congruence is also $\app$, then by Lemma~\ref{l:maximalwitness} 
one of $w$ and $w'$ must appear as a cogenerator of some component $\til_i$.  
This~proves part~(a).  

Next, fix $a, b \in Q$ with $a \nsim b$.  By Corollary~\ref{c:kmcong}, 
there is a prime $P \subset Q$ and $u \in Q$ such that (after possibly swapping $a$ and $b$) $a + u$ is a key witness with key aide $b + u$.  If $a + u$ has suspicious discrete testimony, then by Proposition~\ref{p:trueaide} it does not have nil as a key aide, so $b + u$ is also a key witness for $P$.  If, additionally, $a + u$ and $b + u$ have distinct $P$-prime congruences, then since $a + u$ and $b + u$ have identical discrete testimony, the discrete testimony of $b + u$ is not suspicious.  Since $a \nsim_{b+u}^P b$, this completes the proof.  
\end{proof}


\begin{cor}\label{c:completeanalog}
Fix a mesoprimary decomposition $\til = \bigcap_i \til_i$.  Each truly associated prime congruence in of $\til$ is associated to some component $\til_i$, and any component whose associated prime congruence is not truly associated to $\til$ is redundant.  \qed
\end{cor}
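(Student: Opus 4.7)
The plan is to deduce both parts directly from Theorem \ref{t:trulyassociatedcong}, using Lemma \ref{l:maximalwitness} to handle the case of a maximal true witness and Corollary \ref{c:kmcong} to produce a true witness from any pair separated by a non-redundant component. The argument is essentially a bookkeeping exercise organized around the case split in Definition \ref{d:truewitness}(c).

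For the first claim, fix a truly associated prime $\app$, arising as the $P$-prime congruence at a true $\til$-witness $w$. By Definition \ref{d:truewitness}(c), either $w$ has non-suspicious discrete testimony, in which case Theorem \ref{t:trulyassociatedcong}(a)(i) directly produces a component $\til_i$ with associated prime $\app$, or $w$ is maximal among $\til$-witnesses for $P$. In the latter case I would apply Lemma \ref{l:maximalwitness} to secure a component $\til_i$ whose cogenerator is either $w$ itself or a key aide $w'$ of $w$; when $w$ is the cogenerator, $\app$ is the associated prime of $\til_i$ by the mesoprimary decomposition condition, and when it is $w'$, Theorem \ref{t:trulyassociatedcong}(a)(ii) delivers the same conclusion.

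For the second claim I plan to argue by contrapositive: assume a component $\til_i$ with associated prime $\app_i$ is not redundant, and show that $\app_i$ is truly associated. Non-redundancy supplies a pair $a, b \in Q$ with $a \nsim b$ under $\til$ but $a \sim_j b$ under every $\til_j$ with $j \ne i$. Corollary \ref{c:kmcong} then yields a prime $P$ and an element $u \in Q$ such that, after possibly swapping, $a + u$ is a key $\til$-witness for $P$ with key aide $b + u$. Reproducing the case analysis from the proof of Theorem \ref{t:trulyassociatedcong}(b), if $a + u$ has non-suspicious discrete testimony it is already a true witness; otherwise $b + u$ is also a key witness, and comparing the two $P$-prime congruences at $a + u$ and $b + u$ identifies a true witness whose $P$-prime congruence is $\app_i$.

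The main obstacle lies in the second claim: I must verify that the true witness extracted from $a + u$ and $b + u$ in fact carries the specific prime $\app_i$ associated to the isolating component $\til_i$. This reduces to showing that, because $\til_i$ is mesoprimary and is alone in separating $a$ from $b$, a cogenerator of $\til_i$ realizing this separation must share a $P$-prime congruence with one of $a + u$ or $b + u$; here I would lean on the mesoprimary characterization recorded in Remark \ref{r:mesoprimaryequiv} together with the defining condition on the associated prime of a mesoprimary component in Definition \ref{d:kmmesodecomp}.
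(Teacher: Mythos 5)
Your overall strategy — deducing both halves of the corollary from Theorem~\ref{t:trulyassociatedcong}, with Lemma~\ref{l:maximalwitness} and Corollary~\ref{c:kmcong} supplying the witnesses — is the argument the paper has in mind (the corollary is marked \qed, so no explicit proof is given), but your unwinding of it has a genuine hole in each part.

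For the first claim, the maximal-witness branch does not close. Lemma~\ref{l:maximalwitness} only guarantees that \emph{one of} $w$ or a key aide $w'$ is a cogenerator of some $\til_i$; by Definition~\ref{d:kmmesodecomp} the associated prime of that $\til_i$ is then the $P$-prime congruence of $\til$ at the cogenerator. If the cogenerator turns out to be $w'$, the associated prime you obtain is the $P$-prime congruence \emph{at $w'$}, which need not equal $\app$. You invoke Theorem~\ref{t:trulyassociatedcong}(a)(ii) to conclude it does, but (a)(ii) is a \emph{hypothesis} — it already presupposes that $w'$ has prime congruence $\app$ — so you cannot use it to derive that fact. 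The case you are missing is exactly the one Theorem~\ref{t:trulyassociatedcong}(b) isolates: a maximal true witness $w$ with suspicious testimony whose key aides all carry a different prime congruence. That case satisfies neither (i) nor (ii), so part~(a) gives you nothing for $w$; you would need to exhibit a \emph{different} true witness with prime congruence $\app$ to which (a) applies, and your argument does not do that.

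For the second claim, you correctly see the obstacle but the proposed repair does not work as sketched. From non-redundancy of $\til_i$ and Corollary~\ref{c:kmcong} you obtain a true witness $w''$ (one of $a+u$, $b+u$) whose coprincipal component $\til_{w''}^P$ separates $a$ from $b$, and Lemmas~\ref{l:truecogen}/\ref{l:maximalwitness} make $w''$ (or an aide) a cogenerator of some component $\til_j$. But you cannot conclude $j = i$: the coprincipal component $\til_{w''}^P$ is the \emph{finest} congruence cogenerated by $w''$ admissible in a mesoprimary decomposition (Remark~\ref{r:induced}), so $\til_{w''}^P$ separating $a$ and $b$ does not force the coarser $\til_j$ to separate them. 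Appealing to Remark~\ref{r:mesoprimaryequiv} and Definition~\ref{d:kmmesodecomp} does not bridge this: those tell you the prime congruences of $\til$ and $\til_j$ agree at cogenerators, but they give no control over whether $\til_j$ identifies $a$ with $b$. As a result the chain ``$\til_i$ is the unique separator $\Rightarrow$ the produced true witness has prime $\app_i$'' is not established, and the contrapositive argument stalls at precisely the point you flagged.
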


We conclude this section by characterizing the minimal and irredundant mesoprimary decompositions of congruences with no embedded associated monoid primes.  

\begin{thm}\label{t:mintruecogen}
Fix a mesoprimary decomposition $\til = \bigcap_i \til_i$.  If $P$ is a minimal associated prime of $\til$, then every true witness $w$ of $P$ 
is a cogenerator of some component.  
\end{thm}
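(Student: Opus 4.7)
The proof splits on whether the discrete testimony of $w$ is suspicious. In the non-suspicious case, Lemma~\ref{l:truecogen} directly gives $w$ as a cogenerator of some component $\til_i$, completing the argument.

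When the testimony is suspicious, Definition~\ref{d:truewitness}(c) forces $w$ to be maximal among $\til$-witnesses for $P$. Here the minimality hypothesis on $P$ becomes essential: after localizing at $P$---valid by \cite[Theorem~3.12]{kmmeso}, since primary decomposition commutes with localization---no associated prime of the localized congruence is properly contained in $P$, so every component $\til_i$ in the decomposition must be $P$-mesoprimary. Fix a key aide $w'$ for $w$, which by Proposition~\ref{p:trueaide} is non-nil and Green's-distinct from $w$ in $Q_P$. Applying Lemma~\ref{l:maximalwitness}, some $\til_i$ has $w$ or $w'$ as a cogenerator; if the former, we are done.

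Assume instead that $w'$ is a cogenerator of $\til_i$. Then $w + p \sim w' + p \sim_i \infty$ for every generator $p$ of $P$, and provided $w$ itself is non-nil modulo $\til_i$, a sufficiently high power $p^n$---which is nil modulo $\til_i$ by $P$-mesoprimarity---serves as a key aide for $w$ in $\til_i$: its image $\ol{p^n} = \infty$ is distinct from $\ol w$, its sum with any generator $p'$ matches $\ol w + \ol{p'} = \infty$, and it is Green's-maximal. This makes $w$ a key $\til_i$-witness satisfying $w + p \sim_i \infty$ for all $p$, hence a cogenerator of $\til_i$.

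The crux is verifying that $w$ is non-nil modulo $\til_i$. The key aide condition that $\ol{w'}$ is Green's-maximal in $\{\ol w, \ol{w'}\}$ in $\oQ$ typically yields $\ol w \leq \ol{w'}$ in the Green's preorder, and this inequality descends to the coarser congruence $\til_i$, ruling out $w$ being nil while $w'$ remains a non-nil cogenerator. The main obstacle I expect is the Green's-incomparable edge case of the key aide definition, which should yield to a short additional argument using the common $p$-successor $\ol w + \ol p = \ol{w'} + \ol p$ together with the uniform $P$-prime congruence on non-nil elements of the $P$-mesoprimary $\til_i$ provided by \cite[Corollary~6.7]{kmmeso}.
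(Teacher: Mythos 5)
Your first case (non-suspicious testimony, apply Lemma~\ref{l:truecogen}) matches the paper. However, your second case misses the key structural consequence of minimality and, as a result, contains a gap that the paper's argument sidesteps entirely.

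The paper's route: after localizing at $P$, minimality of $P$ forces $\til$ itself to be $P$-primary by \cite[Corollary~4.21]{kmmeso}. In a $P$-primary congruence (with $P$ nonempty and maximal after localization), a witness that is maximal among $P$-witnesses must be a cogenerator of $\til$ --- otherwise some maximal non-nil element would lie strictly above it and furnish a larger witness --- so it has $\infty$ as a key aide, and its discrete testimony is therefore empty, hence not suspicious by Proposition~\ref{p:trueaide}. In other words, for $P$ a \emph{minimal} associated prime, the case ``$w$ maximal with suspicious testimony'' never occurs, and Lemma~\ref{l:truecogen} disposes of everything uniformly. (The $P = \emptyset$ case, which your write-up does not address, is treated separately in the paper: some component is $\emptyset$-primary and equal to $\app$.)

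Your handling of the (vacuous) suspicious case is where the concrete gap lies. You invoke Lemma~\ref{l:maximalwitness} to get a component with $w$ or $w'$ as cogenerator, and when it is $w'$ you try to upgrade to $w$ by showing $w$ is non-nil modulo $\til_i$. Your argument works only when $\ol{w} \le \ol{w'}$ in Green's preorder, but the key-aide condition merely requires $\ol{w'}$ to be \emph{maximal} in $\{\ol w, \ol{w'}\}$, not the maximum, so $\ol w$ and $\ol{w'}$ may be incomparable. You flag this ``edge case'' as something that ``should yield to a short additional argument,'' but no such argument is supplied, and one does not obviously exist in general --- it is unnecessary precisely because Corollary~4.21 makes the whole case empty. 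Your assertion that every localized component ``must be $P$-mesoprimary'' is also unjustified by the definition of a mesoprimary decomposition alone. The correct move is to replace the entire detour through Lemma~\ref{l:maximalwitness} with the observation that $P$-primarity collapses the maximal-witness subcase into the non-suspicious one.
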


\begin{proof}
Let $\app$ denote the $P$-prime congruence at $w$, and let $\app_i$ denote the prime congruence associated to $\til_i$ for each $i$.  If $P = \emptyset$, then since $P$ is associated to $\til$, some component $\til_i$ is $P$-primary, and in fact $\til_i = \app$.  Now assume $P$ is nonempty.  Once again, after localizing at $P$, assume $P$ is maximal.  Since $P$ is a minimal associated prime, $\til$ is $P$-primary by \cite[Corollary~4.21]{kmmeso}.  Since $w$ is true, either it is a maximal witness for $P$, in which case it has $\infty$ as a key aide, or its testimony is not suspicious.  In either case, we are done by Lemma~\ref{l:truecogen}.  
\end{proof}

\begin{cor}\label{c:irredundantcong}
Any congruence $\til$ on $Q$ with no embedded associated monoid primes has a unique irredundant induced coprincipal decomposition and a unique induced mesoprimary decomposition.  In particular, this holds when $\til$ is primary.  
\end{cor}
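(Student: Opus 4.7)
The plan is to combine Theorems~\ref{t:truedecomp} and~\ref{t:mintruecogen} to force the cogenerators of any irredundant induced decomposition to be exactly the true $\til$-witnesses, after which both uniqueness statements fall out quickly.

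I would begin by observing that, since $\til$ has no embedded associated monoid primes, every associated prime of $\til$ is minimal, so Theorem~\ref{t:mintruecogen} applies to every associated prime: for each associated prime $P$ and every true $\til$-witness $w$ for $P$, $w$ must appear as a cogenerator of some component in every mesoprimary decomposition of $\til$. Pairing this with Theorem~\ref{t:truedecomp}, which realizes $\til$ as the common refinement of the coprincipal components cogenerated by its true witnesses, pins down the cogenerators in any irredundant induced coprincipal decomposition $\til = \bigcap_i \til_{w_i}^{P_i}$. Indeed, every true witness must occur as some $w_i$; conversely, if some $w_j$ were non-true, then $\{w_i : i \neq j\}$ would still contain every true witness, so removing the $j$-th component would leave the intersection equal to $\til$ by Theorem~\ref{t:truedecomp}, contradicting irredundance. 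Since each coprincipal component $\til_w^P$ is determined by $w$, $P$, and $\til$ via Definition~\ref{d:kmcong}(d), this uniquely determines the irredundant induced coprincipal decomposition.

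For the unique induced mesoprimary decomposition---understood as the minimal one, obtained by grouping coprincipal summands by associated prime---the same argument works prime by prime. In any minimal induced decomposition, each associated prime $P$ contributes a single $P$-mesoprimary component expressed as a common refinement of coprincipal components $\til_w^P$; every true $P$-witness is forced to appear, and non-true ones are redundant by the preceding argument. Hence the $P$-mesoprimary component equals the common refinement of $\til_w^P$ over all true $P$-witnesses $w$, which is $P$-mesoprimary by Remark~\ref{r:mesoprimaryequiv}. The ``in particular'' clause is immediate: a primary congruence has a unique associated prime, which is automatically not embedded.

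The main obstacle I anticipate is executing the redundancy argument without circularity: the claim that a non-true cogenerator in an irredundant induced coprincipal decomposition is redundant depends on already knowing that the remaining cogenerators include every true witness, which in turn depends on Theorem~\ref{t:mintruecogen} together with the embedded-prime-free hypothesis. I would also be careful to treat cogenerators up to Green's equivalence, so that Green's-equivalent witnesses that yield the same coprincipal component do not generate spurious non-uniqueness.
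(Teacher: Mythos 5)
Your approach is essentially the same as the paper's: both rely on Theorem~\ref{t:truedecomp} to produce the candidate decomposition and on Theorem~\ref{t:mintruecogen} (applicable to every associated prime because none is embedded) to force every true witness to occur as a cogenerator, which gives irredundance and uniqueness of the induced coprincipal decomposition. Your version of that half of the argument is in fact spelled out in slightly more detail than the paper's one-sentence proof, and it is correct, including the ``in particular'' observation.

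However, there is a genuine error in how you pass to the unique induced mesoprimary decomposition. You assert that ``each associated prime $P$ contributes a single $P$-mesoprimary component expressed as a common refinement of coprincipal components $\til_w^P$'' over all true $P$-witnesses, and that this common refinement is $P$-mesoprimary by Remark~\ref{r:mesoprimaryequiv}. This is false: minimality (Definition~\ref{d:congmesodecomp}(b)) requires distinct associated prime \emph{congruences}, not distinct monoid primes, and a single minimal prime $P$ can carry several distinct associated $P$-prime congruences --- for instance, the congruence of $I_2$ in Example~\ref{e:truewitnessconditions} has three witnesses for the maximal prime with three different $P$-prime congruences and no embedded primes. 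In such a case the common refinement of $\til_w^P$ over all true $P$-witnesses is not $P$-mesoprimary, precisely by the criterion in Remark~\ref{r:mesoprimaryequiv}, since the $P$-prime congruences at the surviving non-nil elements do not coincide. The correct grouping is by associated $P$-prime \emph{congruence}: the paper takes the common refinement of coprincipal components that share an associated prime congruence, and the fact that this refinement is mesoprimary is \cite[Proposition~6.9]{kmmeso}, not Remark~\ref{r:mesoprimaryequiv}. With that correction (group by prime congruence and cite Proposition~6.9) your argument goes through and matches the paper's.
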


\begin{proof}
Theorem~\ref{t:truedecomp} produces the unique induced coprincipal decomposition, as omitting any component yields an expression that cannot decompose $\til$ by Theorem~\ref{t:mintruecogen}.  Furthermore, replacing any set of components with their common refinement whenever they share an associated prime congruence results in a minimal mesoprimary decomposition by \cite[Proposition~6.9]{kmmeso}.  
\end{proof}

\section{Posets of associated mesoprimes}\label{s:posets}

In the final section of this paper, we answer a question posed by Kahle and Miller.  It is known that any poset occurs as the set of associated primes of a monomial ideal; as such, the question is posed only for primary congruences, so that the nilpotent directions of the associated prime congruence (i.e.\ the ``monomial part'' of an ideal inducing the congruence) all coincide.  

\begin{prob}[{\cite[Problem~17.4]{kmmeso}}]\label{prob:congposets}
Characterize the posets of associated prime congruences of primary congruences.  
\end{prob}

Theorem~\ref{t:easyposetanswer} provides a full, albeit unsatisfying, answer to Problem~\ref{prob:congposets} as stated.  The issue is that in the constructed congruence, most of the witnesses are incomparable under the divisibility poset of $Q$.  In view of this, we introduce the prime congruence poset (Definition~\ref{d:primecongruenceposet}), which only renders associated prime congruences comparable if they occur at comparable elements under divisibility in $\oQ_P$.  Surprisingly, the prime congruence poset has no further restrictions than the poset of truly associated prime congruences (Theorem~\ref{t:posetanswer}).  

Note that the content of this section also answers \cite[Problem~17.9]{kmmeso}; see Remark~\ref{r:mesoprimeposets}.  

\begin{defn}\label{d:assposet}
The \emph{poset of truly associated prime congruneces} of a congruence $\til$ is 
$$\MesoAss(\til) = \{\app \text{ truly associated to } \til\},$$
partially ordered by refinement.  
\end{defn}


\begin{lemma}\label{l:uniquemin}
For any primary congruence $\til$, $\MesoAss(\til)$ has a unique minimum.  
\end{lemma}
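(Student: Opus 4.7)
The plan is to first localize at $P$ so that $P$ is maximal (using \cite[Theorem~3.12]{kmmeso}), then exploit the correspondence between $P$-prime congruences at non-nil elements and stabilizer subgroups of $G_P$ to identify a canonical minimum.  Since $\til$ is primary, it has no embedded associated primes, so $P$ is its unique associated prime (by \cite[Corollary~4.21]{kmmeso}); every truly associated prime congruence is thus of the form $\rho_w^P$ for a true $\til$-witness $w$ for $P$.  Each $\rho_w^P$ is determined by the stabilizer $K_w^P \subseteq G_P$ of $\ol w \in \oQ_P$, and the refinement order on $P$-prime congruences at varying $q$ corresponds exactly to the inclusion order on stabilizers: $\rho_q^P$ refines $\rho_{q'}^P$ if and only if $K_q^P \subseteq K_{q'}^P$.

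The crucial observation is the monotonicity of stabilizers along divisibility in $\oQ_P$: if $\ol q$ divides $\ol{q'}$, then any $g \in G_P$ fixing $\ol q$ must also fix $\ol{q'} = \ol q + \ol c$, so $K_q^P \subseteq K_{q'}^P$.  This produces a unique minimum stabilizer $K^* = K_0^P$ among all non-nil elements, realized in the bottom Green's class of $\oQ_P$ (the image of the unit group).  Consequently the corresponding $P$-prime congruence $\rho_0^P$ refines every other $\rho_w^P$, and would be the unique minimum of $\MesoAss(\til)$ provided it actually lies there.

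The main step, which I expect to be the main obstacle, is to exhibit a true witness $w^*$ with $K_{w^*}^P = K^*$, so that $\rho_{w^*}^P = \rho_0^P$ lies in $\MesoAss(\til)$ as its unique minimum.  Here I would invoke Corollary~\ref{c:trueaide}: a true witness either is maximal among $P$-witnesses for $\til$ or has a key aide Green's-equivalent to itself in $Q_P$.  The former case only produces witnesses high in the divisibility order of $\oQ_P$, so the primary hypothesis (no embedded associated primes) must be used to force the second case to populate the bottom Green's class, thereby producing the required $w^*$ and completing the proof.
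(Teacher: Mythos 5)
Your setup — localizing at $P$, observing that the stabilizer $K_q^P$ (and hence the $P$-prime congruence $\rho_q^P$) is monotone under divisibility in $\oQ_P$, and concluding that $\rho_0^P$ refines all others and is the only candidate minimum — matches the paper's opening observation and is sound. The problem is the step you yourself flag as the main obstacle: you never actually exhibit a true witness whose prime congruence is $\rho_0^P$, and the heuristic you sketch for finding one is aimed in the wrong direction. You claim the required $w^*$ must lie in the bottom Green's class of $\oQ_P$ (i.e., near the identity), and that "the primary hypothesis must be used to force the second case [of Corollary~\ref{c:trueaide}] to populate the bottom Green's class." But there is no reason $w^*$ should sit near the identity, and the identity itself is typically not even a witness. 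The paper's proof instead takes $w$ to be \emph{maximal} in $\oQ_P$ among (nilpotent) elements whose prime congruence equals $\rho_0^P$ — such $w$ generally lies far up the divisibility order, not at the bottom — and then argues via Proposition~\ref{p:trueaide} that this maximal $w$ is a true witness, since maximality forces each non-nil cover $w+p$ to have a strictly coarser prime congruence, making the discrete testimony of $w$ non-suspicious.

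So the proposal is not a complete proof: the existence of the realizing true witness is asserted rather than established, and the proposed mechanism for producing it (locating one in the unit Green's class, and invoking "no embedded primes" to force this) does not correspond to anything Corollary~\ref{c:trueaide} actually delivers. To repair it, replace the search near the identity with the paper's choice of a maximal element carrying the stabilizer $K_0^P$, and then verify that element's testimony is not suspicious.
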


\begin{proof}
The prime congruence $\app$ at the origin refines the prime congruence at every non-nil element, and any nilpotent element that is maximal among those with prime congruence $\app$ is a true witness by Theorem~\ref{p:trueaide}.  As such, $\app \in \MesoAss(\til)$.  
\end{proof}




\begin{prop}\label{p:mesoprimeposet}
Fix a finite poset $\Omega = \{p_0, \ldots, p_d\} \subset 2^{[n]}$.  Fix distinct primes $a_0, \ldots, a_n \in \ZZ$, and for $0 \le i \le d$, let 
$$b_i = \prod_{j \in p_i} a_j \quad \text{ and } \quad I_i = \<y^{b_i} - 1\> \subset \kk[y].$$
\begin{enumerate}[(a)]
\item 
The posets (i) $\{b_0, \ldots, b_d\}$, ordered by divisibility, and (ii) $\{I_0, \ldots, I_d\}$, ordered by reverse containment, each coincide with $\Omega$.  

\item 
No ideal $I_i$ equals the intersection of a collection of ideals in $\{I_0, \ldots, I_d\} \setminus \{I_i\}$.  

\end{enumerate}
\end{prop}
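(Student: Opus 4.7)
For part (a), the plan is to reduce both assertions to elementary arithmetic. Because the $a_j$ are distinct primes, $b_i$ is a squarefree integer whose prime support is exactly $\{a_j : j \in p_i\}$, so unique factorization in $\ZZ$ gives $b_i \mid b_k \iff p_i \subseteq p_k$, establishing poset~(i). For poset~(ii), I would invoke the standard identity that $y^m - 1 \mid y^n - 1$ in $\kk[y]$ iff $m \mid n$, which translates reverse ideal containment directly into divisibility of the $b_i$'s. In particular, the $b_i$'s and the $I_i$'s are pairwise distinct because the $p_i$'s are.

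For part (b), I would show $I_i \ne \bigcap_{j \ne i} I_j$ by splitting on whether $p_i$ is the unique maximal element of $\Omega$. In the easier case, where $p_i$ is not the unique maximum, some $p_j$ with $j \ne i$ fails to be contained in $p_i$, so $b_j \nmid b_i$ and hence $y^{b_i} - 1 \notin I_j$. Since $\bigcap_{j \ne i} I_j \subseteq I_j$, this exhibits $y^{b_i} - 1$ as an element of $I_i$ that lies outside the intersection, so the two ideals cannot coincide.

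The main obstacle is the residual case, in which $p_i$ is the unique maximum, so every $b_j$ with $j \ne i$ is a proper divisor of $b_i$ and the containment $I_i \subseteq \bigcap_{j \ne i} I_j$ holds. Here I need to produce an element of the intersection that lies outside~$I_i$. The plan is to work in the PID $\kk[y]$, writing $\bigcap_{j \ne i} I_j = \langle L \rangle$ for $L = \lcm_{j \ne i}(y^{b_j} - 1)$, and to appeal to the cyclotomic factorization $y^n - 1 = \prod_{d \mid n} \Phi_d(y)$. Then $L = \prod_{d \in D} \Phi_d(y)$ with $D = \bigcup_{j \ne i} \{d : d \mid b_j\}$, and the key observation is that $b_i \notin D$ because $b_j < b_i$ for every $j \ne i$; hence the irreducible factor $\Phi_{b_i}$ of $y^{b_i} - 1$ is absent from $L$, giving $y^{b_i} - 1 \nmid L$ and thus $L \in \bigcap_{j \ne i} I_j \setminus I_i$. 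A minor care point is to arrange that $a_0, \ldots, a_n$ are coprime to $\mathrm{char}\,\kk$, so that the cyclotomic polynomials over $\kk$ retain their irreducibility and pairwise distinctness.
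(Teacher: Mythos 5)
Part (a) is correct and matches the paper's (extremely terse) proof, which just invokes the divisibility fact $(y^c-1)\mid(y^{c'}-1) \iff c\mid c'$ together with unique factorization.

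For part (b) you have the right key ingredients---the PID structure of $\kk[y]$, the lcm description of the intersection, and the observation that $y^{b_i}-1$ cannot divide $\lcm_{j}(y^{b_j}-1)$ when each $b_j$ is a proper divisor of $b_i$---but you prove a weaker statement than what is claimed. The proposition asserts that $I_i$ is not the intersection of \emph{any} subcollection of $\{I_j : j \ne i\}$, whereas you only rule out $I_i = \bigcap_{j\ne i} I_j$. Since intersecting over a smaller index set produces a \emph{larger} ideal, knowing $I_i \ne \bigcap_{j\ne i} I_j$ does not preclude $I_i = \bigcap_{j\in S} I_j$ for some proper $S \subsetneq \{j : j\ne i\}$. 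The fix is mechanical but should be stated: fix an arbitrary nonempty $S \subseteq \{0,\dots,d\}\setminus\{i\}$ and run your same dichotomy on $S$ (either some $b_j$, $j\in S$, fails to divide $b_i$, in which case $y^{b_i}-1 \in I_i \setminus I_j$; or every $b_j$ with $j\in S$ is a proper divisor of $b_i$, in which case $L = \lcm_{j\in S}(y^{b_j}-1)$ lies in $\bigcap_{j\in S}I_j$ but not in $I_i$). Your global dichotomy ``is $p_i$ the unique maximum of $\Omega$'' should be replaced by the local dichotomy ``does $p_i$ contain every $p_j$ with $j \in S$.''

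Your caution about the characteristic is reasonable given that you invoke cyclotomic polynomials and their irreducibility over $\kk$, and the fix (choose the $a_j$ coprime to $\operatorname{char}\kk$) is legitimate and always available since this is a construction. It is worth noting, though, that the fix is not actually needed: the statement $y^c-1 \mid \lcm_j(y^{c_j}-1) \iff c \mid c_j$ for some $j$ holds over any field, as one can check by comparing root multiplicities in $\bar\kk$ after writing $c = p^a c'$, $c_j = p^{a_j} c_j'$ with $p = \operatorname{char}\kk \nmid c', c_j'$. This avoids irreducibility of $\Phi_{b_i}$ altogether and would let you drop the coprimality hypothesis.
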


\begin{proof}
This follows from the fact that $(y^c - 1) \mid (y^{c'} - 1)$ if and only if $c \mid c'$.  
\end{proof}

\begin{thm}\label{t:easyposetanswer}
Fix a finite subset $\Omega = \{A_0, A_1, \ldots, A_d\} \subset 2^{[n]}$ with $A_0 = \emptyset$.  Let $I_0, \ldots, I_d \subset \kk[z]$ denote the ideals from Proposition~\ref{p:mesoprimeposet}.  If
$$I = I_0 + x_1I_1 + \cdots + x_dI_d + \<x_1, \ldots, x_d\>^2 \subset \kk[x_1, \ldots, x_d, y]$$
then the poset $\MesoAss(\til_I)$ is isomorphic to $\Omega$.  
\end{thm}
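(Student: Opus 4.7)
The plan is to construct an order-preserving bijection between $\Omega$ and $\MesoAss(\til_I)$. I would begin by unpacking the congruence $\til_I$ on $Q = \NN^{d+1}$. The relations from $\langle x_1, \ldots, x_d\rangle^2$ collapse every monomial of $x$-degree at least two into the nil class, so every non-nil class lies either on the $y$-axis $\{y^k : k \geq 0\}$ or on one of the rows $\{x_i y^k : k \geq 0\}$. The binomial $y^{b_0} - 1$ induces cyclic identifications of period $b_0$ on the $y$-axis, and on each $x_i$-row the generator $x_i(y^{b_i} - 1)$ of $x_i I_i$ together with the consequence $x_i(y^{b_0} - 1)$ of $I_0$ produces cyclic identifications of period $\gcd(b_0, b_i)$.

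Next, for each $A_i \in \Omega$ I would designate a canonical true witness $w_i$ and compute the associated prime congruence. For $A_0 = \emptyset$ the natural choice is the origin (possibly shifted along the $y$-axis), regarded as a witness for the face prime $P = \langle x_1, \ldots, x_d\rangle$, whose $P$-prime congruence has stabilizer $b_0 \ZZ$ in $G_P \cong \ZZ$. For $i \geq 1$, take $w_i = x_i$, with stabilizer $\gcd(b_0, b_i)\ZZ$. In each case I would produce a key aide either from the nil class (when the row meets nil after adding one generator) or from the same Green's class (supplied by the cyclic identifications), so that Proposition~\ref{p:trueaide} and Corollary~\ref{c:trueaide} imply $w_i$ is a true witness.

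Third, I would verify the poset isomorphism. Proposition~\ref{p:mesoprimeposet} gives that $\Omega$ under inclusion is isomorphic to $\{b_0, \ldots, b_d\}$ under divisibility, and the latter corresponds to the poset of stabilizer subgroups $\{b_i\ZZ\}$ under reverse inclusion, which in turn matches the refinement order on the resulting prime congruences $\{\app_i\}$. Finally, I would rule out extraneous truly associated primes: any key witness for $\til_I$ must lie on one of the rows above, its stabilizer in $G_P \cong \ZZ$ must coincide with some $\gcd(b_0, b_i)\ZZ$, and hence its prime congruence agrees with one of the $\app_i$.

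I expect the main obstacle to be this final verification, since enumerating all key witnesses and confirming that each one's prime congruence has already been accounted for requires a delicate case analysis over the face primes of $Q$ and the Green's classes of potential witnesses. In particular, Corollary~\ref{c:trueaide} must be invoked to dismiss witnesses whose candidate aides yield a stabilizer not of the form $\gcd(b_0, b_i)\ZZ$, typically by exhibiting suspicious discrete testimony or by showing the witness fails to be maximal among witnesses for its prime.
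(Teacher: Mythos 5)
Your outline follows the natural approach---compute the $P$-prime congruence at each distinguished element and then compare posets---but the argument as written contains a concrete inconsistency that breaks the final step. In your second paragraph you correctly observe that $x_i(y^{b_0}-1)\in I$ (a consequence of $I_0\subset I$) combines with $x_i(y^{b_i}-1)$ to give stabilizer $K_{x_i}^P = \gcd(b_0,b_i)\ZZ$. But in your third paragraph you silently pass to the poset of subgroups $\{b_i\ZZ\}$. Those two descriptions agree only when $\gcd(b_0,b_i)=b_i$, i.e.\ when $b_i\mid b_0$, i.e.\ when $A_i\subseteq A_0$. The hypothesis is the opposite: $A_0=\emptyset$, so $b_0=1$, and by your own computation $\gcd(b_0,b_i)=1$ for every $i$. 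That makes every stabilizer all of $\ZZ$, so every $P$-prime congruence at a non-nil element coincides, and the poset you actually produce is a singleton rather than $\Omega$. You need to reconcile the computed stabilizer with the one you use before the poset comparison can proceed.

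For comparison, the paper's own proof does not compute stabilizers at all; it simply asserts that the $P$-prime congruence at $x_i$ is the one induced by $I_i$ and that the one at the origin is induced by $I_0$, without accounting for the interaction between $I_0$ and the $x_iI_i$. Your more explicit calculation actually surfaces this tension---taken at face value it suggests the hypothesis $A_0=\emptyset$ conflicts with the intended stabilizers $b_i\ZZ$, which would instead require $b_i\mid b_0$ (e.g.\ $A_0$ maximal rather than minimal). So the right move is either to explain why the $I_0$-generated relation does not contribute to $K_{x_i}^P$, or to flag and resolve the apparent mismatch in the hypothesis. Separately, your closing paragraph on ruling out extraneous truly associated primes is only a sketch, and you yourself flag it as the delicate part; it would need to be carried out, though this is secondary to the stabilizer issue.
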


\begin{proof}
For each $i \in \{1, \ldots, d\}$, the monoid element corresponding to $x_i$ is a key witness for $\til_I$ with associated prime congruence induced by $I_i$, and the prime congruence at the origin is the congruence incuded by $I_0$.  As such, $\MesoAss(\til_I) = \{\til_0, \til_1, \ldots, \til_d\}$ is isomorphic to the poset $\Omega$.  
\end{proof}

\begin{defn}\label{d:primecongruenceposet}
Fix a primary congruence $\til$ on $Q$.  Given $q \in Q$, 
let $\app_q$ denote the $P$-prime congruence of $\til$ at $q$.  The \emph{prime congruence poset} $(\Omega(\til), \preceq)$ consists of
\begin{itemize}
\item 
the set $\Omega(\til)$ of pairs $(q,\app_q)$ for non-nil $q \in \oQ_P$ modulo the equivalence relation generated by relating $(a, \app_a)$ and $([a + b], \app_{a+b})$ whenever $\app_a = \app_{a+b}$, and

\item 
the partial ordering $\preceq$ under which $(a, \app_a) \preceq (b, \app_b)$ whenever $\<a\> \supset \<b\>$.  

\end{itemize}
\end{defn}

\begin{lemma}\label{l:posrel}
Fix a primary congruence $\til$  The poset relation of $\til$ is an equivalence relation which coarsens $\til$, and the order $\preceq$ on $\Omega(\til)$ is a partial order.  
\end{lemma}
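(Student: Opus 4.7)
The plan is to verify the three required claims in sequence: that the generating relation closes to a bona fide equivalence relation, that this relation coarsens $\til$, and that $\preceq$ descends to a partial order on $\Omega(\til)$.

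First, the relation $(a, \app_a) \sim ([a+b], \app_{a+b})$ for $\app_a = \app_{a+b}$ is already reflexive (take $b = 0$) and closed under composing translators (so already transitive), so the equivalence it generates is merely its symmetric closure. That this coarsens $\til$ is immediate, since pairs are indexed by $\oQ_P$-classes---themselves quotients by $\til$---and thus $a \til b$ in $Q$ forces the induced pairs to coincide.

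For $\preceq$ I would check reflexivity and transitivity directly from the transitivity of $\supseteq$ on principal ideals of $\oQ_P$. Well-definedness on equivalence classes follows from the observation that each generator step $(a, \app_a) \sim ([a+b], \app_{a+b})$ only shrinks the principal ideal---$\<a\> \supseteq \<a+b\>$---so representatives may be adjusted along a zigzag without disturbing comparisons.

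The main obstacle is antisymmetry: given mutual comparabilities, one reaches representatives with $\<a\> = \<b\>$ in $\oQ_P$, so that $a = b + c$ and $b = a + d$ yields $a + (c+d) = a$. Since $a$ is non-nil, $c + d$ cannot lie in $\ol{P_P}$ (otherwise iterating gives $a = \nil$), so $c + d$ is a unit of $\oQ_P$; as $\ol{P_P}$ is additively closed, one of $c, d$ must itself be a unit, say $c$. Then $a$ and $b$ lie in a common $G_P$-orbit with stabilizer $K_a^P = K_b^P$, so $\app_a = \app_b$; writing $c = a' - b'$ for $a', b' \in Q \minus P$, the zigzag $b \to b + a' \to a$ proceeds by $Q$-moves, each Green's equivalent in $\oQ_P$ and hence preserving the prime congruence, which places $(a,\app_a)$ and $(b,\app_b)$ in one equivalence class. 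I expect this antisymmetry argument to be the hardest step, as it requires both the structural observation that non-nil Green's equivalent elements of $\oQ_P$ must differ by a unit of $G_P$, and a technical realization of such unit moves as zigzags of $Q$-translations that preserve $\app$ at every step.
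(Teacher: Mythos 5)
Two genuine issues.  First, your claim that the generated equivalence relation is just the symmetric closure of the translation relation is false.  The translation relation $a\mapsto a+b$ is indeed reflexive and transitive, but the symmetric closure of a transitive relation need not be transitive: two forward moves $a\to a+b$ and $a\to a+c$ (with $\app_a=\app_{a+b}=\app_{a+c}$) relate both $a+b$ and $a+c$ to $a$, yet $a+b$ and $a+c$ may be incomparable in the divisibility preorder and hence unrelated by any single forward or reverse step.  One genuinely needs the full zigzag closure.  Second, your antisymmetry argument rests on the unjustified assertion that ``one reaches representatives with $\<a\>=\<b\>$.''  Since $\preceq$ compares equivalence classes, $[a]\preceq[b]$ and $[b]\preceq[a]$ give $\<a_1\>\supset\<b_1\>$ and $\<b_2\>\supset\<a_2\>$ for possibly different representatives; nothing forces equality of principal ideals for a common pair.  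The unit argument you give afterward is correct (it essentially reproves that Green's-equivalent non-nil elements of $\oQ_P$ share a prime congruence), but it sits downstream of this gap.

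More broadly, you have not isolated the observation the paper's proof actually pivots on: if $\<a\>\supset\<b\>$ for non-nil $a,b\in\oQ_P$, then $\app_a$ coarsens $\app_b$.  With that fact in hand, antisymmetry is nearly automatic and no unit decomposition is needed.  The two comparisons give coarsenings in both directions, so $\app_a=\app_b$; and then the containment $\<a_1\>\supset\<b_1\>$ together with $\app_{a_1}=\app_{b_1}$ \emph{is itself} one of the generating relations of $\Omega(\til)$, so $[a]=[b]$ directly.  The same observation underlies the two structural facts the paper extracts---that Green's-equivalent elements are identified in $\Omega(\til)$, and that any element sandwiched between two with equal prime congruences has that same prime congruence---which are what make the ``coarsens'' and partial-order claims go through.
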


\begin{proof}
The important observation is that when $\<a\> \supset \<b\>$ for non-nil $a, b \in \oQ_P$, the prime congruence $\app_a$ at $a$ coarsens the prime congruence $\app_b$ at $b$.  This implies 
\begin{enumerate}[(i)]
\item 
$(a, \app_a)$ and $(b, \app_b)$ are identified in $\Omega(\til)$ whenever $a$ and $b$ lie in the same Green's class in $\oQ_P$, and 

\item 
if $\<a\> \supset \<b\>$ and $\app_a = \app_b$, then the prime congruence $\app_c$ at any $c$ satisfying $\<a\> \supset \<c\> \supset \<b\>$ agrees with $\app_a$ and $\app_b$.  

\end{enumerate}
As such, any pairs $(a,\app_a)$ and $(b, \app_b)$ identified in $\Omega(\til)$ do indeed satisfy $\app_a = \app_b$.   

At this point, checking that $\preceq$ is a partial order is straightforward.  Clearly $\preceq$ is reflexive, and transitivity of $\preceq$ follows from transitivity of Green's preorder on $Q$ and the transitivity of the equivalence relation defining $\Omega(\til)$.  Lastly, if $(a,\app_a) \preceq (b,\app_b)$ and $(b, \app_b) \preceq (a, \app_a)$, then the obserations in the above paragraph imply $\app_a = \app_b$, meaning $(a, \app_a)$ and $(b, \app_b)$ are identified in $\Omega(\til)$.  This completes the proof.  
\end{proof}

\begin{thm}\label{t:posetanswer}
Fix a finite subset $\Omega = \{p_0, p_1, \ldots, p_d\} \subset 2^{[n]}$ with $p_0 = \emptyset$.  Let $I_0, \ldots, I_d$ denote the ideals from Proposition~\ref{p:mesoprimeposet}, and define 
$$M = \<x_1^2, \ldots, x_d^2\> + \<x_ix_j : p_i, p_j \text{ incomparable}\>$$
and $B = \<x_ix_j - x_ix_k : p_i \supset p_j \text{ and } p_i \supset p_k\>$.  The ideal 
$$I = B + x_1I_1 + \cdots + x_dI_d + M \subset \kk[x_1,\ldots, x_d, y]$$
has $\Omega(\til_I)$ isomorphic to $\Omega$.  
\end{thm}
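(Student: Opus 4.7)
The plan is to analyze the congruence $\til_I$ on $Q = \NN^{d+1}$ (with coordinates $x_1,\ldots,x_d,y$) at the relevant prime $P = \<x_1,\ldots,x_d\>$, classify the non-nil Green's classes of $\oQ_P$, compute the stabilizer $K_q^P \subseteq G_P = \{0\}^d \times \ZZ$ at each such class, and assemble these data into $\Omega(\til_I)$ together with its partial order.

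First I would identify the non-nil monomials.  Since $M$ contains $x_i^2$ for every $i$ and $x_ix_j$ for every incomparable pair $(p_i,p_j)$, any non-nil monomial has the form $y^c\prod_{i\in S}x_i$ where $S$ indexes a chain in $\{p_1,\ldots,p_d\}$.  Moreover, using $B$, any chain of length at least three produces a nil monomial: for $p_{i_1}\subsetneq p_{i_2}\subsetneq p_{i_3}$, multiplying the relation $x_{i_3}x_{i_1}-x_{i_3}x_{i_2}\in B$ by $x_{i_2}$ yields $x_{i_1}x_{i_2}x_{i_3}\sim x_{i_2}^2x_{i_3}$, which is nil by $M$.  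Hence the non-nil Green's classes of $\oQ_P$ consist of the origin, each $\ol{x_i}$ for $i\in\{1,\ldots,d\}$, and at most one product class $\ol{x_ix_j}$ per $i$ admitting some $j\in\{1,\ldots,d\}$ with $p_j\subsetneq p_i$ (the particular $j$ being irrelevant by $B$).

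Next I would compute $K_q^P$ at each such class.  For $\ol{x_i}$, the generator $x_i(y^{b_i}-1)\in x_iI_i$ gives $b_i\ZZ\subseteq K_{\ol{x_i}}^P$; grading $I$ by total $x$-degree and tracking the $x_i$-coefficient of any putative element $x_i(y^k-1)\in I$ reduces the reverse inclusion to $(y^{b_i}-1)\mid(y^k-1)$, which by Proposition~\ref{p:mesoprimeposet} forces $b_i\mid k$, so $K_{\ol{x_i}}^P=b_i\ZZ$.  For the product classes $\ol{x_ix_j}$, a similar degree-two computation combined with the substitution freedom from $B$ (multiplying by $x_{j'}(y^{b_{j'}}-1)$ for each $j'$ with $p_{j'}\subsetneq p_i$) shows $K_{\ol{x_ix_j}}^P$ contains every $b_{j'}\ZZ$ with $p_{j'}\subsetneq p_i$ and matches $b_k\ZZ$ for a specific $p_k$ determined by the configuration.

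Third I would determine the equivalence classes in $\Omega(\til_I)$ and their partial order.  By Definition~\ref{d:primecongruenceposet}, any two non-nil $a,b\in\oQ_P$ with $b=a+u$ and $\app_a=\app_b$ are identified, which collapses each product class $\ol{x_ix_j}$ onto the class of the appropriate singleton representative whose stabilizer coincides with its own.  I would then verify that the resulting set of equivalence classes is in bijection with $\Omega$, sending the class of $\ol{x_i}$ to $p_i$ for $i\ge1$ and the origin class to $p_0$.  For the order, invoking Lemma~\ref{l:posrel} for well-definedness, one checks $p_i\subseteq p_j$ implies $[\ol{x_i}]\preceq[\ol{x_j}]$ by exhibiting reps $a\in[\ol{x_i}]$ and $b\in[\ol{x_j}]$ with $\<a\>\supset\<b\>$ — typically by passing through a product representative $\ol{x_ix_j}$ in one of the two classes — and that no other inequalities hold, since any such $a+u=b$ would force a negative $x$-coordinate in $u$.

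The main obstacle is this last stage: pinning down which product representatives fall into which equivalence class and confirming that the principal-ideal divisibility order in $\oQ_P$ realizes precisely the containment order on $\Omega$.  This requires the delicate combinatorial use of the $B$-relations together with the multiplicative independence of the primes $a_0,\ldots,a_n$ furnished by Proposition~\ref{p:mesoprimeposet}.
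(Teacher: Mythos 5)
The proposal follows the same strategy as the paper's proof—identify the non-nil Green's classes of $\oQ_P$, compute the stabilizer $K_q^P$ (and hence the $P$-prime congruence) at each, and assemble $\Omega(\til_I)$—so there is no genuine difference of approach.  The issue is that you stop precisely at the step that carries the content of the theorem, and you say so yourself (``the main obstacle is this last stage\dots'').  That is a gap, not a completed proof.

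To be concrete about where the gap is.  You correctly note that, by the $B$-relations, $K_{\ol{x_ix_j}}^P$ contains $b_{j'}\ZZ$ for \emph{every} $j'$ with $p_{j'}\subsetneq p_i$, so $K_{\ol{x_ix_j}}^P \supseteq \gcd_{j'}(b_{j'})\,\ZZ$.  You then assert this ``matches $b_k\ZZ$ for a specific $p_k$ determined by the configuration'' and that the resulting equivalence classes biject with $\Omega$.  But the first claim is nontrivial: $\gcd_{j': p_{j'}\subsetneq p_i}(b_{j'})$ corresponds, via Proposition~\ref{p:mesoprimeposet}, to the intersection $\bigcap_{j': p_{j'}\subsetneq p_i} p_{j'}$, and one must argue that this intersection is realized by some $p_k \in \Omega$ with $k\geq 1$ (so that $\ol{x_k}$ exists, divides the product class, and has the matching prime congruence, forcing the identification in $\Omega(\til_I)$).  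Without that, the degree-two Green's classes can survive as extra elements of $\Omega(\til_I)$ and the claimed bijection $[\ol{x_i}]\mapsto p_i$ breaks.  Likewise the order verification ``$p_i\subseteq p_j$ implies $[\ol{x_i}]\preceq[\ol{x_j}]$ \dots\ by passing through a product representative'' needs exactly this identification to have already been established.  The paper's own proof treats this same point tersely (it flatly asserts which $I_k$ induces the prime congruence on the degree-two classes), so you have located the crux, but you have not actually supplied the stabilizer computation and collapsing argument that the statement requires.
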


\begin{proof}
The only monomials in the variables $x_1, \ldots, x_d$ that lie outside of $I$ are either degree 1 or have the form $x_ix_j$ for $p_i \supset p_j$ (in particular, $I$ contains every monomial of total degree~3).  The only prime congruences that occur are induced by $I_0, \ldots, I_d$; $I_0$ induces the prime congruence at the origin, and $I_i$ for $i \ge 1$ induces the prime congruence at the elements corresponding to the monomials $\{x_i, x_ix_j : p_i \supsetneq p_j\}$.  The binomials generating $B$ ensure this set has exactly two distinct elements modulo $I$, the larger of which corresponds to the unique true witness whose associated prime congruence is induced by $I_i$.  Divisibility among the nonzero monomials modulo $I$ ensures $\Omega(\til)$ is isomorphic to $\Omega$.  
\end{proof}


\begin{remark}\label{r:mesoprimeposets}
In general, the set of associated prime congruences (as well as the prime congruence poset) can differ if different classes of witnesses are used in place of true witnesses (e.g.\ they may have different cardonalities).  However, every witness for every congruence constructed in Theorems~\ref{t:easyposetanswer} and~\ref{t:posetanswer} is true.  This means if one relaxed the problem to allow prime congruences at any more general class of witnesses, the resulting poset would be the same.  Consequently, the content of this section also answers \cite[Problem~17.9]{kmmeso}, the analog of Problem~\ref{prob:congposets} for binoimal ideals.  Indeed, upon referencing \cite[Definitions~10.4 and~12.1]{kmmeso}, one can easy check that each ideal $I$ defined in Theorems~\ref{t:easyposetanswer} or~\ref{t:posetanswer} decomposes as an intersection of mesoprimary ideals whose poset of associated mesoprimes is also isomorphic to the given poset $\Omega$.  
\end{remark}


\end{document}